\newtheorem{theorem}{Theorem}[section]
\newtheorem{lemma}[theorem]{Lemma}
\newtheorem{conjecture}[theorem]{Conjecture}
\newtheorem{corollary}[theorem]{Corollary}
\newtheorem{problem}[theorem]{Problem}
\theoremstyle{definition}
\newtheorem{definition}[theorem]{Definition}
\newtheorem{example}[theorem]{Example}
\newtheorem*{theorem1*}{Theorem \ref{thm:main}}
\newtheorem*{theorem2*}{Theorem \ref{thm:cw}}
\newtheorem*{theorem3*}{Theorem \ref{cor:main}}
\newtheorem*{theorem4*}{Theorem \ref{thm:gen}}
\newtheorem*{conjecture1*}{Conjecture \ref{conj:gao}}
\newtheoremstyle{named}{}{}{\itshape}{}{\bfseries}{.}{.5em}{#1 \thmnote{#3}}
\theoremstyle{named}
\newcommand*{\C}{\mathbb{C}}
\def\S{\mathfrak{S}}
\def\Z{\mathbb{Z}}
\def\hd{{\widehat{D}}}
\def\hc{{\widehat{C}}}
\newcommand*{\Schub}{\mathfrak{S}}
\DeclareMathOperator*{\Span}{Span}
\DeclareMathOperator*{\purple}{Purple}
\DeclareMathOperator*{\Purple}{\mathbf{P}}
\DeclareMathOperator*{\perm}{perm}
\title{Inclusion-exclusion on Schubert polynomials}
\author{Karola M\'esz\'aros}
\address{Karola M\'esz\'aros, Department of Mathematics, Cornell University, Ithaca, NY 14853. \newline\textup{karola@math.cornell.edu}
}
\author{Arthur Tanjaya}
\address{Arthur Tanjaya, Department of Mathematics, Cornell University, Ithaca NY 14853.  \newline\textup{amt333@cornell.edu}
}
\thanks{Karola M\'esz\'aros is partially  supported by  CAREER NSF Grant DMS-1847284.}
\subjclass[2010]{05E05}
\begin{document}

	\begin{abstract}   We prove that an inclusion-exclusion inspired  expression of Schubert polynomials of permutations that avoid  the patterns $1432$ and $1423$ is nonnegative. Our theorem implies a partial affirmative answer to a   recent conjecture of Yibo Gao about principal specializations of  Schubert polynomials.  We propose a general framework for finding   inclusion-exclusion inspired  expression of Schubert polynomials of all permutations.   \end{abstract}

	\maketitle

	\section{Introduction}
	Schubert polynomials, introduced  by Lascoux and Sch\"utzenberger in \cite{LS1}, represent cohomology classes of Schubert cycles in the flag variety. They are also multidegrees of matrix Schubert varieties \cite{multidegree} and  wield an impressive collection of   combinatorial formulas \cite{laddermoves, BJS, FK1993, nilcoxeter, thomas, lenart, manivel, prismtableaux}. Yet, only recently have their supports been established as integer points of generalized permutahedra \cite{FMS, MTY}. There has also been several exciting recent developments about the coefficients of Schubert polynomials: (1) they are  known to be log-concave along root directions in their Newton polytopes \cite{june}; (2) the set of permutations whose Schubert polynomials have all their coefficients less than or equal to a fixed integer $m$ is closed under pattern containment \cite{zeroone}. Recall that $\pi=\pi_1\ldots \pi_k \in S_k$ is a pattern of $\sigma=\sigma_1\ldots \sigma_n \in S_n$ if and only if there are indices $1\leq i_1<i_2<\cdots<i_k\leq n$ so that the relative order of  
	$\pi_1, \ldots, \pi_k$ and of $\sigma_{i_1}, \ldots, \sigma_{i_k}$ are the same.  

	\subsection{Nonnegative linear combinations of Schubert polynomials with monomial coefficients.} In this paper we investigate nonnegativity properties of linear combinations   of Schubert polynomials with monomial coefficients in $\Z[x_1, \ldots, x_n]$ associated to patterns of a fixed permutation.  A first step in this direction is a recent result by Fink, St. Dizier and the first author of the present paper: 	 

		\begin{theorem} \cite[Theorem 1.2]{zeroone}  \label{thm:fms} 		Fix  $\sigma \in S_n$ and let $\pi \in S_{n-1}$ be the pattern of $\sigma$ with Rothe diagram $D(\pi)$ obtained by removing row $k$  and column $\sigma_k$ from $D(\sigma)$. Then
		\begin{align} \label{eq:?}
			\mathfrak{S}_{\sigma}(x_1, \ldots, x_n)-M_{\sigma, \pi}(x_1, \ldots, x_n) \mathfrak{S}_{\pi}(x_{1}, \ldots, \widehat{x_k}, \ldots, x_{n}) \in \mathbb{Z}_{\geq 0}[x_1, \ldots, x_n]		\end{align}  
		where   
		\[M_{\sigma, \pi}(x_1, \ldots, x_n) = \left(\prod_{(k,i)\in D(\sigma)}{x_k}\right)\left(\prod_{(i,\sigma_k)\in D(\sigma)}{x_i} \right).\]
		\end{theorem}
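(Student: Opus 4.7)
The plan is to pick a combinatorial expansion of the Schubert polynomial as a positive sum of monomials over a diagram model and exhibit a distinguished subfamily whose weight sum equals $M_{\sigma,\pi}\mathfrak{S}_\pi$. A natural choice is Kohnert's theorem, giving
\[
\mathfrak{S}_\sigma(x_1,\ldots,x_n) = \sum_{D \in K(\sigma)} x^D,
\]
where $K(\sigma)$ is the set of diagrams reachable from $D(\sigma)$ by iterated Kohnert moves and $x^D = \prod_i x_i^{c_i(D)}$ with $c_i(D)$ the number of cells of $D$ in row $i$.

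The subfamily $\mathcal{A} \subseteq K(\sigma)$ I have in mind is the set of $D$ whose intersection with row $k$ equals $D(\sigma) \cap \text{row } k$ and whose intersection with column $\sigma_k$ equals $D(\sigma) \cap \text{column } \sigma_k$. For such $D$ the contribution to $x^D$ from these prescribed cells is exactly $M_{\sigma,\pi}(x_1,\ldots,x_n)$. I would then construct a deletion map $\delta:\mathcal{A} \to K(\pi)$ by erasing row $k$ and column $\sigma_k$ from $D$ and reindexing rows $>k$ and columns $>\sigma_k$ down/left by one. Since by hypothesis $D(\pi)$ itself is obtained from $D(\sigma)$ by this deletion, at the base point $\delta(D(\sigma)) = D(\pi)$, and one expects Kohnert moves of $\sigma$ that do not touch row $k$ or column $\sigma_k$ to translate directly into Kohnert moves of $\pi$.

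The key verification is that $\delta$ is a bijection with weight identity
\[
x^D = M_{\sigma,\pi}(x_1,\ldots,x_n)\, x^{\delta(D)}(x_1,\ldots,\widehat{x_k},\ldots,x_n),
\]
after which summing over $\mathcal{A}$ gives $M_{\sigma,\pi}\mathfrak{S}_\pi = \sum_{D \in \mathcal{A}} x^D$ and therefore
\[
\mathfrak{S}_\sigma - M_{\sigma,\pi}\mathfrak{S}_\pi = \sum_{D \in K(\sigma)\setminus \mathcal{A}} x^D \in \Z_{\geq 0}[x_1,\ldots,x_n],
\]
establishing the theorem.

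The main obstacle is showing that $\delta$ is a well-defined bijection. One must trace how Kohnert moves interact with the constraints defining $\mathcal{A}$, and check that every Kohnert diagram of $\pi$ lifts uniquely to an element of $\mathcal{A}$ by reinserting the cells of $D(\sigma) \cap \text{row } k$ and $D(\sigma) \cap \text{column } \sigma_k$. The asymmetry between row $k$ (whose cells contribute a power of the single variable $x_k$) and column $\sigma_k$ (whose cells contribute different $x_i$'s) makes the combinatorial bookkeeping delicate: one has to rule out Kohnert sequences in $K(\sigma)$ that deposit extra cells into row $k$ or column $\sigma_k$ in a way that has no counterpart in $\pi$. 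If this route runs into obstructions, I would fall back on an alternative model such as reduced pipe dreams or bumpless pipe dreams and attempt an analogous restriction-and-deletion argument there.
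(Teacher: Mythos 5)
The paper does not give its own proof of Theorem~\ref{thm:fms} --- it cites it from \cite{zeroone} --- but it does prove the strictly stronger Theorem~\ref{thm:gen1}, whose specialization to Rothe diagrams yields Theorem~\ref{thm:fms}. That proof is purely linear-algebraic: it works with the flagged Weyl module realization $\Schub_w = \chi_{D(w)}$, expresses a coefficient of $\chi_D$ as the dimension of a span of products of minors of $Y$ (Theorem~\ref{cor:fms}), and then proves a linear-dependence-lifting lemma (Lemma~\ref{lem:lift}): if the determinant products attached to the augmented diagrams $\widehat{C}^{(i)}\cup K$ inside $D$ are dependent, so are those attached to the $\widehat{C}^{(i)}$ inside $\widehat{D}$. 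This gives $[M\mathbf{m}]\chi_D \ge [\mathbf{m}]\chi_{\widehat{D}}$ directly, with no combinatorial bijection required. Your route is genuinely different, and unfortunately it breaks.

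The key identity you need --- $\sum_{D \in \mathcal{A}} x^D = M_{\sigma,\pi}\,\Schub_{\pi}(x_1,\ldots,\widehat{x_k},\ldots,x_n)$ with $\mathcal{A}$ the Kohnert diagrams of $\sigma$ agreeing with $D(\sigma)$ on row $k$ and column $\sigma_k$ --- is already false for $\sigma = 1432$, $k=3$, $\sigma_k = 3$. Here $D(\sigma)=\{(2,2),(2,3),(3,2)\}$, $\pi = 132$, $M_{\sigma,\pi}=x_2x_3$, and $M_{\sigma,\pi}\Schub_\pi(x_1,x_2,x_4)= x_1x_2x_3 + x_2^2x_3$. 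The five Kohnert diagrams of $1432$ are $\{(2,2),(2,3),(3,2)\}$, $\{(2,2),(1,3),(3,2)\}$, $\{(2,2),(2,3),(1,2)\}$, $\{(1,2),(1,3),(3,2)\}$, $\{(1,2),(1,3),(2,2)\}$, and only the first has row $3 = \{(3,2)\}$ \emph{and} column $3 = \{(2,3)\}$. So $\mathcal{A}$ is a single diagram of weight $x_2^2x_3$; the term $x_1x_2x_3$ of $M_{\sigma,\pi}\Schub_\pi$ has no preimage. Equivalently, the lift of $\{(1,2)\}\in K(\pi)$ would have to be $\{(1,2),(2,3),(3,2)\}$, which is not a Kohnert diagram of $1432$ (Kohnert moves only ever lift the \emph{rightmost} box of a row, and $(2,2)$ can never move to $(1,2)$ while $(2,3)$ is still present). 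Thus $\delta$ fails to be surjective. The theorem is still true for this $(\sigma,k)$: one checks $\Schub_{1432}-M_{\sigma,\pi}\Schub_\pi = x_1x_2^2 + x_1^2x_2 + x_1^2x_3 \ge 0$, but the subfamily $\mathcal{A}$ is too small to account for $M_{\sigma,\pi}\Schub_\pi$. The underlying issue is that the rightmost-box restriction in Kohnert's rule ties columns together: once the column-$\sigma_k$ boxes are pinned, some otherwise-available upward moves in other columns become unreachable from $D(\sigma)$. The paper's determinantal approach sidesteps this entirely because it never has to exhibit an injection of diagrams, only to transfer linear dependencies.

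Your contingency plan (pipe dreams or bumpless pipe dreams) faces the analogous obstruction, and in any case remains unargued, so as written the proposal does not establish the theorem.
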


		In particular, Theorem \ref{thm:fms} implies that the set of permutations whose Schubert polynomials have all their coefficients less than or equal to a fixed integer $m$ is closed under pattern containment.

\medskip

		The first result of this paper is a broad extension of Theorem \ref{thm:fms} for $1432$ and $1423$ avoiding permutations:   

\begin{theorem} \label{thm:main}
Let $w \in S_n$ be a $1432$ and $1423$ avoiding permutation and let $u$ be a subword of $w$. Then
\begin{equation} \label{eqn:f_w1}
\sum_{u \le v \le w} (-1)^{\left|w\right|-\left|v\right|} M_{w, v} \Schub_{\perm(v)}(\mathbf{x}_{w^{-1}(v)}) \in \Z_{\ge0}[x_1, \dots, x_n],
\end{equation}
where
\begin{equation*}
M_{w, v} \coloneqq \prod_{(i, j) \in D(w) \setminus \widehat{D(w)}_v} x_i.
\end{equation*}
\end{theorem}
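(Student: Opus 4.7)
I would prove the statement by induction on $r := |w| - |u|$. The base case $r = 0$ is immediate, since the sum collapses to $\Schub_w(\mathbf{x})$. The base case $r = 1$ is precisely Theorem~\ref{thm:fms}, applied to the unique position of $w$ that is not in $u$: the sum has only two terms, and their signed combination is exactly \eqref{eq:?}.

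\textbf{Recursion for the inductive step.} For $r \ge 2$, denote the left-hand side of \eqref{eqn:f_w1} by $f(u,w)$. Fix a position $k$ that appears in $w$ but not in $u$, and let $w' := \perm(w_{\ne k}) \in S_{n-1}$; note that $w'$ inherits the $1432$- and $1423$-avoidance, since these properties are closed under pattern containment. Partition the sum over $v$ according to whether $v$ contains the position $k$. The contribution from $v \ni k$ equals $f(u \cup \{k\}, w)$ and is nonnegative by induction. For the contribution from $v \not\ni k$, I would first establish the cell-level decomposition
\[
D(w) \setminus \widehat{D(w)}_v \;=\; \bigl\{(i,j) \in D(w) : i = k \text{ or } j = w_k\bigr\} \;\sqcup\; \bigl(D(w') \setminus \widehat{D(w')}_v\bigr),
\]
valid for all $v \le w'$. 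This yields the monomial factorization $M_{w,v} = M_{w,w'} \cdot M_{w',v}$. Combined with the sign shift $(-1)^{|w|-|v|} = -(-1)^{|w'|-|v|}$ and the equality $\mathbf{x}_{w^{-1}(v)} = \mathbf{x}_{w'^{-1}(v)}$ for $v \not\ni k$, this identifies the $v \not\ni k$ contribution as $-M_{w,w'} \cdot f(u, w')$, giving the recursion
\[
f(u,w) \;=\; f(u \cup \{k\}, w) \;-\; M_{w,w'} \cdot f(u, w'),
\]
which specializes correctly to Theorem~\ref{thm:fms} when $r = 1$.

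\textbf{Main obstacle.} The recursion expresses $f(u,w)$ as a difference of two inductively nonnegative polynomials, so nonnegativity is not automatic; both summands can individually be large. The $1432$- and $1423$-avoidance hypothesis must enter precisely to control this subtraction. The plan is to strengthen the inductive hypothesis by showing that, for a judicious choice of $k$, there is a coefficient-wise refinement
\[
f(u \cup \{k\}, w) \;=\; M_{w,w'} \cdot f(u, w') \;+\; g
\]
for some $g \in \Z_{\ge 0}[\mathbf{x}]$, from which $f(u,w) = g \ge 0$ follows. A cleaner alternative is a direct combinatorial route: realize $f(u,w)$ as a weighted sum over a family of pipe dreams (or other monomial-positive objects) for $w$, such that the inclusion-exclusion in \eqref{eqn:f_w1} produces precisely this family via sign-cancellation. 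Either route must exploit the rigid structure of the Rothe diagrams of $1432$- and $1423$-avoiding permutations --- essentially an ``almost-dominant'' restriction on the essential set that forces at most one element strictly between certain value pairs --- to guarantee that the coefficient-wise bound, or equivalently the cancellation of signs in the combinatorial model, goes through. Isolating this structural feature and turning it into either refinement is the heart of the argument and the principal difficulty.
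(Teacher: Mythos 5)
You have set up a potentially useful recursion, and you have correctly diagnosed that it does not close: the formula $f(u,w) = f(u\cup\{k\},w) - M_{w,w'}\cdot f(u,w')$ expresses $f(u,w)$ as a difference of two inductively nonnegative quantities, and you honestly flag that ``isolating this structural feature and turning it into either refinement is the heart of the argument.'' That is precisely where the gap lies: the proposal stops exactly at the point where the real content of the theorem must enter, and neither of your two suggested routes (strengthened inductive hypothesis, or a pipe-dream model with sign-cancellation) is carried out or even sketched in enough detail to be checked.

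The idea you are missing is Fan and Guo's theorem (Theorem \ref{thm:fg} / Corollary \ref{cor:counting} in the paper): for $1432$- and $1423$-avoiding $w$, the coefficient of a monomial $\mathbf{m}$ in $\Schub_w$ is \emph{exactly} the number of diagrams $C \le D(w)$ with $\prod_{(i,j)\in C} x_i = \mathbf{m}$, not merely bounded above by it. This is what lets the paper replace each summand $[\mathbf{m}]\,M_{w,v}\Schub_{\perm(v)}(\mathbf{x}_{w^{-1}(v)})$ by the cardinality of an explicit finite set $B_v$ of sub-diagrams of $D(w)$ that agree with $D(w)$ outside the rows/columns of $v$. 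These sets satisfy $B_v \subseteq B_{v'}$ whenever $v \le v'$ and $B_{v}\cap B_{v'} = B_{v\wedge v'}$, so the alternating sum collapses by M\"obius inversion to $\bigl|B_w \setminus \bigcup_{v\in I} B_v\bigr|$, which is manifestly nonnegative. In other words, the paper never forms your recursion at all; it goes straight to a set-theoretic inclusion-exclusion on diagrams, and the avoidance hypothesis is used only once, up front, to turn Schubert coefficients into honest counts. Your instinct that ``a combinatorial model where inclusion-exclusion produces a family via sign-cancellation'' is the right shape is correct, but without the Fan--Guo input (and without the observation that the resulting sets are nested and closed under meets) the argument does not go through, and the strengthened-induction route you float is not obviously available.

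A smaller point: in your recursion you identify $\mathbf{x}_{w^{-1}(v)}$ with $\mathbf{x}_{w'^{-1}(v)}$, but $w'\in S_{n-1}$ uses positions $1,\dots,n-1$ while $w$ uses $1,\dots,n$, so a reindexing (shifting indices past $k$ down by one) is silently being applied. This is fixable but should be made explicit; the paper handles the analogous issue carefully in Lemma \ref{lem:counting}, which is exactly the ``words version'' of the Fan--Guo counting statement you would need.
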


In Theorem \ref{thm:main} we use the relation of containment on words: for words $u, v$, we say $u \le v$ if $u$ occurs as a subword in $v$. Moreover, for a word $v$ of length $n$, $\pi = \perm(v)$ is the permutation in $S_n$ such that the relative order of $\pi_1, \dots, \pi_n$ and of $v_1, \dots, v_n$ are the same.
For these and  other definitions used in Theorems \ref{thm:fms} and \ref{thm:main} see   Sections \ref{sec:background} and \ref{sec:main} which lay them out in detail. Here we give an example of Theorem \ref{thm:main} for illustration.
For $w = 1342$ and $u = 42$ we have
$\{ v \mid u \le v \le w \} = \{ 1342, 142, 342, 42 \},$
so the alternating sum in (\ref{eqn:f_w1}) becomes
\begin{align*}
&M_{w, 1342}\Schub_{1342}(x_1,x_2, x_3, x_4) - M_{w, 142}\Schub_{132}(x_1, x_3, x_4) - M_{w, 342}\Schub_{231}(x_2, x_3, x_4) + M_{w, 42}\Schub_{21}(x_3, x_4) \\
&\quad = 1 \cdot (x_1x_2 + x_1x_3 + x_2x_3) - x_2 \cdot (x_1 + x_3) - 1 \cdot (x_2 x_3) + x_2 \cdot (x_3) \\
&\quad = x_1x_3,
\end{align*}
which indeed has nonnegative coefficients. See Figure \ref{fig2} for an illustration.  
\medskip

	An immediate corollary of  Theorem \ref{thm:main}  is the following theorem: 
	\begin{theorem} \label{cor:main}
		Let $w \in S_n$ be a $1432$ and $1423$ avoiding permutation. If $u$ is a subword of $w$, then
		\begin{equation} \label{eqn:cw}
			\sum_{u \le v \le w} (-1)^{\left|w\right|-\left|v\right|} \Schub_{\textup{perm}(v)}({\bf1})\geq 0,
		\end{equation}
		where $\Schub_{\textup{perm}(v)}({\bf1})$ denotes the value of the Schubert polynomial $\Schub_{\textup{perm}(v)}$ with all its variables set to $1$.
	\end{theorem}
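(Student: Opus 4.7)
The plan is to deduce Theorem \ref{cor:main} directly from Theorem \ref{thm:main} by specializing all variables to $1$. The key observation is that if a polynomial $P(x_1, \dots, x_n) \in \Z_{\ge 0}[x_1, \dots, x_n]$ has nonnegative coefficients, then $P(1, \dots, 1) \ge 0$, since the evaluation at $\mathbf{1}$ simply sums the (nonnegative) coefficients.

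Concretely, I would take the polynomial from (\ref{eqn:f_w1}),
\[
F(x_1, \dots, x_n) \coloneqq \sum_{u \le v \le w} (-1)^{|w|-|v|} M_{w, v} \Schub_{\perm(v)}(\mathbf{x}_{w^{-1}(v)}),
\]
and note that Theorem \ref{thm:main} gives $F \in \Z_{\ge 0}[x_1, \dots, x_n]$, hence $F(\mathbf{1}) \ge 0$.

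Next I would check that setting every $x_i = 1$ in $F$ recovers the left-hand side of (\ref{eqn:cw}) term by term. Since $M_{w, v}$ is a monomial in the variables $x_i$, we have $M_{w, v}(\mathbf{1}) = 1$. Similarly, the specialization $\Schub_{\perm(v)}(\mathbf{x}_{w^{-1}(v)})$ evaluated at $\mathbf{x} = \mathbf{1}$ (so that the restricted tuple $\mathbf{x}_{w^{-1}(v)}$ also becomes all ones) is precisely $\Schub_{\perm(v)}(\mathbf{1})$. Combining these,
\[
F(\mathbf{1}) = \sum_{u \le v \le w} (-1)^{|w|-|v|} \Schub_{\perm(v)}(\mathbf{1}),
\]
which is exactly the quantity appearing in (\ref{eqn:cw}), and thus it is nonnegative.

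There is really no obstacle here once Theorem \ref{thm:main} is in hand; the only mild care needed is to observe that the restricted variable tuple $\mathbf{x}_{w^{-1}(v)}$ also consists entirely of $1$'s under the specialization $\mathbf{x} = \mathbf{1}$, so no bookkeeping about which subset of variables is used matters in the end.
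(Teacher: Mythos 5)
Your proposal is correct and matches the paper's proof exactly: the paper derives Theorem~\ref{cor:main} from Theorem~\ref{thm:main} simply by setting all $x_i = 1$, noting (implicitly) that the monomial $M_{w,v}$ and the variable reindexing in $\mathbf{x}_{w^{-1}(v)}$ both become trivial under this specialization. Your extra sentence checking those two points just makes explicit what the paper leaves to the reader.
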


		Theorem \ref{cor:main} is closely related to a recent conjecture of Gao \cite[Conjecture 3.2]{Gao} regarding the principal specialization of Schubert polynomials as we now explain. We also conjecture (Conjecture \ref{conj:1.3}) in Section \ref{sec:conj} that Theorem \ref{cor:main} holds for all permutations $w \in S_n$. 

		\subsection{Principal specializations of Schubert polynomials.} Macdonald \cite[Eq. 6.11]{macdonald} famously expressed the principal specialization $\Schub_{\sigma}(\bf{1})$ of the Schubert polynomial $\Schub_{\sigma}$  in terms of the reduced words of $\sigma$. Fomin and Kirillov \cite{reduced} placed this expression in the context of plane partitions for dominant permutations, while after two decades Billey et al. \cite{bijmac} provided a combinatorial proof. In 2017, Stanley \cite{she} considered the asymptotics of   $\Schub_{\sigma}(\bf{1})$  as well as the role pattern containment plays in its value. The asymptotics question was partially answered by Morales, Pak and Panova \cite{pak}, while the pattern avoidance question  inspired Weigandt \cite{132} and Gao \cite{Gao}, among others, to seek an understanding of $\Schub_{\sigma}(\bf{1})$ in terms of the permutation patterns of $\sigma$. Weigandt showed that $\Schub_{\sigma}({\bf 1})\geq 1+p_{132}(\sigma)$, where  $p_{\pi}(\sigma)$ is the number of patterns $\pi$ in the permutation $\sigma$, while Gao improved this to $\Schub_{\sigma}({\bf 1})\geq 1+p_{132}(\sigma)+p_{1432}(\sigma)$. Gao conjectured that there exist nonnegative integers $c_w$, for $w \in S_{\infty}$, such that    $$\Schub_{\sigma}({\bf 1})=\sum_{\pi \in S_{\infty}} c_{\pi}p_{\pi}(\sigma).$$ 

Equivalently:

	\begin{conjecture}(\cite[Conjecture 3.2]{Gao})\label{conj:gao} There exist nonnegative integers $c_w$, for $w \in S_{\infty}$, such that    $$\Schub_w({\bf 1})=\sum_{v \leq w} c_{{\rm perm}(v)},$$ where  
$v \leq w$ denotes that $v$ occurs as a subword in $w$. 
	\end{conjecture}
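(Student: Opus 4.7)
The plan is to reduce Conjecture \ref{conj:gao} to an inclusion--exclusion identity at the polynomial level and then bootstrap from the framework of Theorem \ref{thm:main}. Since $w \in S_n$ has distinct entries, its subwords are in bijection with subsets of $[n]$, so the subword poset of $w$ is the Boolean lattice $2^{[n]}$. Standard M\"obius inversion applied to the relation $\Schub_w({\bf 1}) = \sum_{v \le w} c_{\perm(v)}$ forces the $c_\pi$ to be uniquely determined and yields the equivalent reformulation
$$c_w = \sum_{v \le w}(-1)^{|w|-|v|}\Schub_{\perm(v)}({\bf 1}) \ge 0 \qquad \text{for every } w \in S_\infty.$$
This is exactly Theorem \ref{cor:main} specialized to $u = \emptyset$, so the conjecture is already established for $w$ avoiding $1432$ and $1423$.

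To handle a general $w$, I would first try to extend Theorem \ref{thm:main} itself, because its polynomial-level nonnegativity specializes at ${\bf 1}$ to the statement we need. The difficulty is that the modified Rothe diagram $\widehat{D(w)}_v$ and monomial weights $M_{w,v}$ in Theorem \ref{thm:main} are crafted for $1432$ and $1423$-avoiding permutations. My first attempt would be a local correction: whenever a quadruple of positions of $w$ forms a $1432$ or $1423$ pattern, perturb $\widehat{D(w)}_v$ so as to reintroduce the boxes whose cancellation is obstructed, then compensate with an adjusted monomial. The smallest bad cases $w = 1432$ and $w = 1423$ would serve as a proving ground to pinpoint the necessary correction.

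The main obstacle is that the authors' framework exploits a fortunate cancellation structure that genuinely fails in the presence of $1432$ or $1423$, so local patches may not compose into a global argument. To get around this I would consider two alternative routes. The first is to find a direct combinatorial model for $c_w$, perhaps via a subset of pipe dreams or Fomin--Kirillov compatible sequences for $w$ cut out by manifestly local conditions, making nonnegativity transparent. The second is an induction on $|w|$, or on the number of $1432$, $1423$ patterns of $w$: if resolving a single bad pattern by a well-chosen transposition yields a permutation to which Theorem \ref{cor:main} already applies, then analyzing the change in $\Schub_w({\bf 1})$ under this transposition might express $c_w$ as a combination of quantities already known to be nonnegative.
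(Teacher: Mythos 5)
Your proposal takes essentially the same route as the paper for the portion that is actually provable. You correctly identify (i) that the subword poset of a fixed $w$ is Boolean (entries are distinct), (ii) that M\"obius inversion converts Gao's identity into the nonnegativity statement $c_w = \sum_{v\le w}(-1)^{|w|-|v|}\Schub_{\perm(v)}(\mathbf{1}) \ge 0$, and (iii) that this specialization of the polynomial-level statement to $u = ()$ is exactly Theorem~\ref{cor:main} applied at $\mathbf{x} = \mathbf{1}$ --- which is precisely what the paper does in Section~\ref{sec:gao} (compare equations~(\ref{eqn:c_w_defn})--(\ref{eqn:c_w_defn3}) and Theorem~\ref{thm:conj}).

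One caution: the statement you are asked to prove is Gao's conjecture for \emph{all} $w \in S_\infty$, and neither your proposal nor the paper establishes it in that generality. What you present for general $w$ are speculative directions (local patches to the diagram/monomial structure, a direct combinatorial model for $c_w$, induction on the number of occurrences of bad patterns), which are reasonable but not proofs. The paper is explicit about this: the general case is reposed as Conjecture~\ref{conj:1.3} and Problem~\ref{prob:gen}, verified computationally through $S_8$. Your plan is therefore aligned with the paper's own open-problem discussion in Section~\ref{sec:conj}; just be careful not to present the proposal as a completed proof of the conjecture. It proves only the $1432$- and $1423$-avoiding case, identically to the paper.
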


	It follows readily via inclusion-exclusion that 
	for $w \in S_{\infty}$:   \begin{equation} \label{eq:cw} c_w=\sum_{v \le w} (-1)^{\left|w\right|-\left|v\right|} \Schub_{\perm(v)}(\bf{1}).\end{equation} Thus, Theorem \ref{cor:main} settles Gao's conjecture \ref{conj:gao} for $1432$ and $1423$ avoiding permutations  $w \in S_{\infty}$ when we specialize it to the empty word $u=()$. Moreover, we also 
	provide a combinatorial interpretation of the numbers $c_w$ for $1432$ and $1423$ avoiding permutations  $w \in S_{\infty}$:

	\begin{theorem} \label{thm:cw}  For $1432$ and $1423$ avoiding permutations  $w \in S_{\infty}$ the value of $c_w$ is the number of diagrams $C \le D(w)$ that cannot be written as $\widehat{C}_\textup{aug}$ for some $\widehat{C} \le \widehat{D(w)}$.  	\end{theorem}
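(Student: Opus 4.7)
The plan is to derive Theorem \ref{thm:cw} by specializing Theorem \ref{thm:main} at $\mathbf{x} = \mathbf{1}$ and reading off its combinatorial content. Applying Theorem \ref{thm:main} with the empty subword $u = ()$ gives a polynomial
\[
F_w(\mathbf{x}) := \sum_{v \le w} (-1)^{|w|-|v|} M_{w,v}\, \Schub_{\perm(v)}(\mathbf{x}_{w^{-1}(v)}) \in \Z_{\ge 0}[x_1, \dots, x_n].
\]
Since every $M_{w,v}$ is a monomial, it evaluates to $1$ at $\mathbf{x} = \mathbf{1}$, and so by \eqref{eq:cw} one has $c_w = F_w(\mathbf{1})$, i.e.\ the sum of the coefficients of $F_w$. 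In particular this immediately reconfirms $c_w \ge 0$; what remains is to identify $F_w(\mathbf{1})$ with the claimed count.

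To do this I would open up the proof of Theorem \ref{thm:main} and keep track of which combinatorial objects witness the nonnegativity. I expect that proof to present $\Schub_{\perm(v)}$ through a pipe-dream or Rothe-diagram style model so that the monomials of $M_{w,v}\, \Schub_{\perm(v)}(\mathbf{x}_{w^{-1}(v)})$ are naturally indexed by sub-diagrams $\widehat{C} \le \widehat{D(w)}$ together with the data of the subword $v$, and that these sub-diagrams are mapped to sub-diagrams of $D(w)$ via the operation $\widehat{C} \mapsto \widehat{C}_{\aug}$. In this setup the coefficient of any $C \le D(w)$ in $F_w$ is an alternating count over pairs $(v, \widehat{C})$ with $\widehat{C}_{\aug} = C$, and the task becomes to evaluate this count.

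The main step is then a sign-reversing involution on such pairs that cancels all contributions corresponding to diagrams $C$ which do arise as some $\widehat{C}_{\aug}$, leaving exactly one uncanceled fixed point for each $C \le D(w)$ that does \emph{not} lie in the image of augmentation. The principal obstacle I anticipate is constructing this involution in a way that is compatible with the subword order on $v$, which is precisely where the hypothesis that $w$ avoids $1432$ and $1423$ is essential, mirroring its role in Theorem \ref{thm:main}. Once the involution is in place, $F_w(\mathbf{1})$ equals the number of fixed points, which is exactly the count asserted in Theorem \ref{thm:cw}.
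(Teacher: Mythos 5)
Your plan follows essentially the same route as the paper: set $u = ()$, evaluate at $\mathbf{x}=\mathbf{1}$ so that each $M_{w,v}$ becomes $1$ and $c_w = F_w(\mathbf{1})$ by \eqref{eq:cw}, and then read the combinatorial interpretation straight out of the proof of Theorem~\ref{thm:main}. One adjustment: you do not need to construct a fresh sign-reversing involution, since the proof of Theorem~\ref{thm:main} already performs the inclusion-exclusion and shows $[\mathbf{m}]F_w = \left|B_w\setminus\bigcup_{v\in I} B_v\right|$; here $B_w$ is the set of $C\le D(w)$ with $\prod_{(i,j)\in C}x_i=\mathbf{m}$, and $\bigcup_{v\in I}B_v$ is precisely the set of such $C$ expressible as $\widehat{C}_{\textup{aug}}$ for some $\widehat{C}\le\widehat{D(w)}$, so summing over $\mathbf{m}$ yields the claim directly. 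You also slightly misplace where the $1432/1423$-avoidance hypothesis is used: the identities $B_v\subseteq B_{v'}$ for $v\le v'$ and $B_v\cap B_{v'}=B_{v\wedge v'}$ hold for arbitrary $w$, so the inclusion-exclusion step itself needs no pattern restriction; the hypothesis enters earlier, in Lemma~\ref{lem:counting} via Fan and Guo's Corollary~\ref{cor:counting}, which is what lets the monomials of $\Schub_{\perm(v)}(\mathbf{x}_{w^{-1}(v)})$ be counted, with multiplicity one, by diagrams $\widehat{C}\le\widehat{D(w)}_v$ in the first place.
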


	See  Section \ref{sec:gao} for more details.

	\medskip

	\subsection{Extending Theorems \ref{thm:fms} \& \ref{thm:main}.} Both Theorem \ref{cor:main} and Theorem \ref{thm:cw} are byproducts of our main Theorem \ref{thm:main}. It is thus most natural to ask in what generality  Theorem \ref{thm:main} holds. While Theorem \ref{cor:main} is conjectured by Gao to hold for all permutations, Theorems \ref{thm:main} and 	\ref{thm:cw} as stated do not. Theorem \ref{thm:main} fails already for $w=1432$. However, the reason it fails leads to other possibilities: the  monomials $M_{w,v}$ we used to formulate Theorem \ref{thm:main} are inspired by Theorem \ref{thm:fms} and  are one of many choices we might have made.   While Fink, M\'esz\'aros, and St.~Dizier  \cite{zeroone} only constructed one monomial $M_{\sigma, \pi}$ for the pair of permutations $(\sigma, \pi)$ in Theorem \ref{thm:fms},   there is a family of  monomials each of which would make \eqref{eq:?} true. We are lead to wonder whether  for an appropriate choice of such monomials Theorem \ref{thm:main} could be generalized to any permutation. We take the first step towards this goal  via the following generalization of   Theorem \ref{thm:fms} showing that a family of monomials, including $M_{\sigma, \pi}$ could work:

	\begin{theorem} \label{thm:gen}
	Fix  $\sigma \in S_n$ and let $\pi \in S_{n-1}$ be the pattern of $\sigma$ with Rothe diagram $D(\pi)$ obtained by removing row $k$  and column $\sigma_k$ from $D(\sigma)$. 
If there is some diagram $K \in \Purple_{k, \sigma_k}(D(\sigma))$ such that
\[
M(x_1, \dots, x_n) = \prod_{(i, j) \in K} x_i,
\]
then
 \[
\mathfrak{S}_{\sigma}(x_1, \ldots, x_n)-M(x_1, \ldots, x_n) \mathfrak{S}_{\pi}(x_{1}, \ldots, \widehat{x_k}, \ldots, x_{n}) \in \mathbb{Z}_{\geq 0}[x_1, \ldots, x_n].\]\end{theorem}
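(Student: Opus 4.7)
The natural strategy is a coefficient-by-coefficient comparison via an injection between monomial generators. Schubert polynomials admit several combinatorial models (pipe dreams, bumpless pipe dreams, prism tableaux), and the proof of Theorem \ref{thm:fms} in \cite{zeroone} constructs an explicit injection at the level of one such model that takes a monomial of $\Schub_{\pi}(x_1, \dots, \widehat{x_k}, \dots, x_n)$, multiplies it by the specific monomial $M_{\sigma,\pi}$, and embeds the result in the monomial support of $\Schub_{\sigma}$. The plan is to revisit this injection, isolate the handful of combinatorial properties of $M_{\sigma,\pi}$ that it actually uses, and show that these properties already hold for every monomial built from a diagram $K \in \Purple_{k,\sigma_k}(D(\sigma))$.

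First I would unpack the definition of $\Purple_{k,\sigma_k}(D(\sigma))$ in full, and verify the basic numerics: for any $K \in \Purple_{k,\sigma_k}(D(\sigma))$, the multiset of row-indices of the cells of $K$ should coincide with the multiset of row-indices of $D(\sigma) \setminus D(\pi)$, so that the total $x$-degree and the row-weight vector of $M(\mathbf{x}) \Schub_{\pi}(x_1,\dots,\widehat{x_k},\dots,x_n)$ match those of $\Schub_{\sigma}(\mathbf{x})$ on the nose. This degree bookkeeping will be the combinatorial bridge that lets any such $K$ serve as a legitimate ``anchor set'' for an inserting construction.

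Second, I would define an insertion map $\phi_K$ on pairs (pipe dream $P$ for $\pi$, placement data from $K$), producing a pipe dream for $\sigma$ whose monomial weight is exactly $\mathbf{x}^P \cdot M(\mathbf{x})$. The idea is to use the row-coordinates of the cells of $K$ to choose the insertion rows when lifting $P$ past the reinstated row $k$ and column $\sigma_k$, and then to argue, via the defining condition of the purple class, that this insertion never creates an illegal crossing. This is the step that should subsume the explicit case $K$ coming from $M_{\sigma,\pi}$, because the hook-shaped $K$ used there is designed exactly so that such a row-by-row insertion goes through automatically.

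The main obstacle is controlling the column structure of the image pipe dream, since the purple condition is formulated at the level of diagrams rather than of insertion sequences; one must show that the purple property is exactly the combinatorial hypothesis needed for $\phi_K$ to land inside the pipe-dream set of $\sigma$ and to be injective. I expect this to reduce to a local-check lemma: given the rows and columns that $K$ occupies in $D(\sigma)$, no two cells of $K$ can be placed in a ``competing'' configuration that would force two insertions to collide. Once this lemma is in place, injectivity of $\phi_K$ and the weight-preservation computation together give \[\Schub_{\sigma}(\mathbf{x}) - M(\mathbf{x})\,\Schub_{\pi}(x_1, \dots, \widehat{x_k}, \dots, x_n) \in \Z_{\ge 0}[x_1, \dots, x_n],\] as the pipe dreams of $\sigma$ not in the image of $\phi_K$ are precisely the witnesses to the nonnegativity of the difference.
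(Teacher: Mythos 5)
Your proposal is on a genuinely different track from the paper, and it has a gap that leaves the core step unproved. The paper does not prove Theorem \ref{thm:gen} through pipe dreams (and, contrary to your premise, the \cite{zeroone} proof of Theorem \ref{thm:fms} also did not go through a pipe-dream injection). Instead, Theorem \ref{thm:gen} is deduced from Theorem \ref{thm:gen1}, which is proved in the flagged Weyl module framework: the coefficient $[\mathbf{m}]\chi_D$ is realized as the dimension of a span of products of minors of the upper-triangular matrix $Y$ (Theorem \ref{cor:fms}), and the key step is a linear-algebra lemma (Lemma \ref{lem:lift}, generalizing \cite[Lemma 5.7]{zeroone}): if the minor-products $\prod_j \det(Y^{C^{(i)}_j}_{D_j})$ attached to $C^{(i)}=\widehat{C}^{(i)}\cup K$ are linearly dependent, then so are the minor-products attached to the $\widehat{C}^{(i)}$. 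Combined with the observation that $\widehat{C}\cup K\le D$ whenever $\widehat{C}\le\widehat{D}$ and $K\in\Purple_{k,l}(D)$ (Lemma \ref{lem:aug}), this gives $[M\mathbf{m}]\chi_D\ge[\mathbf{m}]\chi_{\widehat{D}}$ directly, with no explicit bijection. The role of the purple condition is precisely to guarantee the disjoint-union property and the containment $\widehat{C}\cup K\le D$; it does not require (and would not support) a pipe-dream lifting argument.

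There are two concrete problems with your proposal as written. First, the ``bookkeeping'' claim in your second paragraph is false: for $K\in\Purple_{k,\sigma_k}(D(\sigma))$, the multiset of row indices of $K$ does \emph{not} coincide with that of $D(\sigma)\setminus D(\pi)$ in general. By construction $\Purple_{k,l}(D)$ is closed under taking $K'\le K$ with $K'\subseteq\purple_{k,l}(D)$, and $\le$ on diagrams moves boxes to different (smaller) rows column-by-column; indeed the examples in the paper (e.g.\ Example \ref{p1}) produce the distinct monomials $x_2x_4,x_1x_4,x_2x_3,x_1x_3,x_1x_2$, all with different row-index multisets. Only the total degree is preserved. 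Second, and more importantly, the insertion map $\phi_K$ that would carry the argument is never constructed; you yourself flag this as ``the main obstacle.'' Since the purple condition is phrased in terms of the diagram order $C\le D$ from the Weyl-module picture and not in terms of pipe-dream ladder/chute moves, there is no evident translation, and no such bijective proof is known even for the base case Theorem \ref{thm:fms}. Without a concrete definition of $\phi_K$ and a verification that its image lands in the pipe dreams of $\sigma$ with the correct weight, the proof does not go through.
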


 See Section \ref{sec:gen} for the definition of the set of diagrams $\Purple_{k, \sigma_k}(D(\sigma))$ used in the statement of Theorem \ref{thm:gen} above and Section \ref{sec:conj} for a discussion of how  Theorem \ref{thm:gen} could be used to generalize Theorem \ref{thm:main} as well as Conjecture \ref{conj:16} examining the strength of  Theorem \ref{thm:gen}. 

	\subsection*{Outline of this paper}
	Section~\ref{sec:background}  lays out the general background on Schubert polynomials that we rely on. Section \ref{sec:main} contains the setup and proofs of Theorem \ref{thm:main}, \ref{cor:main} and  \ref{thm:cw}.  Section \ref{sec:gen} provides a proof of Theorem \ref{thm:gen} and its generalization Theorem \ref{thm:gen1}, while Section \ref{sec:conj} concludes with conjectures and open problems.

	\section{Background on Schubert polynomials}
	\label{sec:background}

	Schubert polynomials were originally defined via divided difference operators. We will instead define them as dual chatacters of flagged Weyl modules for Rothe diagrams. This section follows the exposition of \cite{FMS, zeroone}. 

	\subsection{Definition of dual characters of flagged Weyl modules.}
	A \emph{diagram} is a sequence $D = (C_1, C_2, \ldots,$ $C_n)$ of finite subsets of $[n]$, called the \emph{columns} of $D$. We interchangeably think of $D$ as a collection of boxes $(i,j)$ in a grid, viewing an element $i\in C_j$ as a box in row $i$ and column $j$ of the grid. When we draw diagrams, we read the indices as in a matrix: $i$ increases top-to-bottom and $j$ increases left-to-right. 

	The \emph{Rothe diagram} $D(w)$ of a permutation $w\in S_n$ is the diagram
	\[ D(w)=\{(i,j)\in [n]\times [n] \mid i<(w^{-1})_j\mbox{ and } j<w_i \}. \]
	Note that Rothe diagrams have the \emph{northwest property}: If $(r,c'),(r',c)\in D(w)$ with $r<r'$ and $c<c'$, then $(r,c)\in D(w)$.

	Let $G=\mathrm{GL}(n,\mathbb{C})$ be the group of $n\times n$ invertible matrices over $\mathbb{C}$ and $B$ be the subgroup of $G$ consisting of the $n\times n$ upper-triangular matrices. The flagged Weyl module is a representation $\mathcal{M}_D$ of $B$ associated to a diagram $D$. The dual character of $\mathcal{M}_D$ has been shown in certain cases to be a Schubert polynomial \cite{KP} or a key polynomial \cite{flaggedLRrule}. We will use the construction of $\mathcal{M}_D$ in terms of determinants given in \cite{magyar}.

	Denote by $Y$ the $n\times n$ matrix with indeterminates $y_{ij}$ in the upper-triangular positions $i\leq j$ and zeros elsewhere. Let $\mathbb{C}[Y]$ be the polynomial ring in the indeterminates $\{y_{ij}\}_{i\leq j}$. Note that $B$ acts on $\mathbb{C}[Y]$ on the right via left translation: if $f(Y)\in \mathbb{C}[Y]$, then a matrix $b\in B$ acts on $f$ by $f(Y)\cdot b=f(b^{-1}Y)$. For any $R,S\subseteq [n]$, let $Y_S^R$ be the submatrix of $Y$ obtained by restricting to rows $R$ and columns $S$.

	For $R,S\subseteq [n]$, we say $R\leq S$ if $\#R=\#S$ and the $k$\/th least element of $R$ does not exceed the $k$\/th least element of $S$ for each $k$. For any diagrams $C=(C_1,\ldots, C_n)$ and $D=(D_1,\ldots, D_n)$, we say $C\leq D$ if $C_j\leq D_j$ for all $j\in[n]$.

	\begin{definition}
		For a diagram $D=(D_1,\ldots, D_n)$, the \emph{flagged Weyl module} $\mathcal{M}_D$ is defined by
		\[\mathcal{M}_D=\mathrm{Span}_\mathbb{C}\left\{\prod_{j=1}^{n}\det\left(Y_{D_j}^{C_j}\right)\ \middle|\   C\leq D \right\}. \]
		$\mathcal{M}_D$ is a $B$-module with the action inherited from the action of $B$ on $\mathbb{C}[Y]$. 
	\end{definition}
	Note that since $Y$ is upper-triangular, the condition $C\leq D$ is technically unnecessary since $\det\left(Y_{D_j}^{C_j}\right)=0$ unless $C_j\leq D_j$. Conversely, if $C_j\leq D_j$, then $\det\left(Y_{D_j}^{C_j}\right)\neq 0$. 

	For any $B$-module $N$, the \emph{character} of $N$ is defined by $\mathrm{char}(N)(x_1,\ldots,x_n)=\mathrm{tr}\left(X:N\to N\right)$, where $X$ is the diagonal matrix $\mathrm{diag}(x_1,x_2,\ldots,x_n)$ with diagonal entries $x_1,\ldots,x_n$, and $X$ is viewed as a linear map from $N$ to $N$ via the $B$-action. Define the \emph{dual character} of $N$ to be the character of the dual module $N^*$:
	\begin{align*}
		\mathrm{char}^*(N)(x_1,\ldots,x_n)&=\mathrm{tr}\left(X:N^*\to N^*\right) \\
		&=\mathrm{char}(N)(x_1^{-1},\ldots,x_n^{-1}).
	\end{align*}

	\begin{definition}
		For a diagram $D\subseteq [n]\times [n]$, let $\chi_D=\chi_D(x_1,\ldots,x_n)$ be the dual character 
		\[\chi_D=\mathrm{char}^*\mathcal{M}_D. \] 
	\end{definition}
	\subsection{Results about dual characters of flagged Weyl modules}

	A special case of dual characters of flagged Weyl modules of diagrams are Schubert polynomials:

	\begin{theorem}[\cite{KP}]
		\label{thm:kp}
		For  $w$  a permutation and  $D(w)$ its Rothe diagram we have that the Schubert polynomial $\S_w$ is 
				\[\mathfrak{S}_w = \chi_{D(w)}. \]
	\end{theorem}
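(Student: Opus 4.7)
The plan is to verify that $\chi_{D(w)}$ satisfies the two defining properties of the Schubert polynomial $\mathfrak{S}_w$: agreement at the longest permutation and the divided difference recursion. Specifically, I would show (i) $\chi_{D(w_0)} = \mathfrak{S}_{w_0}$ for the longest element $w_0 \in S_n$, and (ii) $\partial_i \chi_{D(w)} = \chi_{D(ws_i)}$ whenever $w$ has a descent at position $i$. Since these two properties uniquely determine Schubert polynomials, downward induction on $\ell(w)$ starting from $w_0$ then forces $\chi_{D(w)} = \mathfrak{S}_w$ for every permutation $w$.

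For the base case, the Rothe diagram of $w_0 = n(n-1)\cdots 1$ is the staircase $D(w_0) = \{(i,j) : i + j \le n\}$, whose $j$-th column is $D_j = \{1, 2, \ldots, n-j\}$. The ordering condition $C_j \le D_j$ forces $C_j = D_j$ for every $j$, so $\mathcal{M}_{D(w_0)}$ is one-dimensional, spanned by the product of principal minors $f = \prod_{j=1}^{n-1} \det(Y_{[n-j]}^{[n-j]})$. Applying the $B$-action $f \cdot X = f(X^{-1} Y)$ and using that left-multiplication by $X^{-1}$ scales row $i$ of $Y$ by $x_i^{-1}$, one computes $f \cdot X = \prod_{i=1}^{n-1} x_i^{-(n-i)} f$. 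Inverting the variables to pass to the dual character produces $\chi_{D(w_0)} = x_1^{n-1} x_2^{n-2} \cdots x_{n-1} = \mathfrak{S}_{w_0}$.

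For the inductive step, given a descent $w_i > w_{i+1}$, I would prove $\partial_i \chi_{D(w)} = \chi_{D(ws_i)}$ by lifting the problem to the module level. One extends the $B$-action on $\mathcal{M}_{D(ws_i)}$ to the minimal parabolic subgroup $P_i = B \cup B s_i B$ and relates $\mathcal{M}_{D(w)}$ to the Demazure-type module obtained from $\mathcal{M}_{D(ws_i)}$ via $P_i$. The Demazure character formula then translates this module-level relationship into precisely the divided difference identity on dual characters. The combinatorial input is the explicit description of how $D(w)$ and $D(ws_i)$ differ, namely by a single box together with a controlled exchange of rows $i$ and $i+1$, reflecting the unit length drop.

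The main obstacle is verifying that the span of determinantal generators admits the extension of the $B$-action to the full parabolic $P_i$, since the root subgroup associated to $s_i$ modifies rows $i$ and $i+1$ of $Y$ and changes the determinants $\det(Y_{D_j}^{C_j})$ nontrivially. The key combinatorial fact that enables closure is the \emph{northwest property} of Rothe diagrams: exchanging an element between rows $i$ and $i+1$ within any column $C_j$ produces another valid chain $C_j' \le D_j$, so the simple root subgroup sends generators to linear combinations of generators. Once this closure is established, the required character identity reduces to standard Demazure-theoretic computations via Frobenius reciprocity, completing the induction.
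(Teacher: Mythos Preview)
The paper does not supply its own proof of this theorem; it is quoted as a known result of Kra\'skiewicz and Pragacz and cited accordingly, so there is no argument in the paper to compare your proposal against.

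That said, your outline is the standard strategy and is essentially the one used in the literature (Kra\'skiewicz--Pragacz, and later Magyar for general \%-avoiding/northwest diagrams): verify the top case $w_0$ directly and then propagate downward via a Demazure/divided-difference identity on the dual characters. Your base case computation is correct. For the inductive step, the shape of the argument is right, but two points deserve more care than you give them. First, the closure statement you need is not about swapping rows in the \emph{fillings} $C$ but about the $P_i$-stability of the module attached to $D(ws_i)$; what the northwest property actually buys is that, for each column $j$, whenever $i+1\in D(ws_i)_j$ one also has $i\in D(ws_i)_j$ (since $ws_i$ has no descent at $i$), and it is this column condition that makes the span of the determinantal generators invariant under the root subgroup for $-\alpha_i$. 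Second, the passage from ``$\mathcal{M}_{D(ws_i)}$ carries a $P_i$-action'' to ``$\partial_i\chi_{D(w)}=\chi_{D(ws_i)}$'' is not automatic: one must identify $\mathcal{M}_{D(w)}$ with the $B$-socle (or an appropriate Demazure-type construction) built from $\mathcal{M}_{D(ws_i)}$ and then invoke the Demazure character formula; your phrase ``Frobenius reciprocity'' is gesturing at the right machinery but hides a genuine module identification that has to be checked. With those two steps made precise, the induction goes through.
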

 
	\begin{theorem}[cf. {\cite[Theorem 7]{FMS}}]
		For any diagram $D\subseteq [n]\times [n]$, the monomials appearing in $\chi_D$ are exactly 
		\[\left\{\prod_{j=1}^{n}\prod_{i\in C_j}x_i\ \middle|\   C\leq D \right\}.\]
	\end{theorem}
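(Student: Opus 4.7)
The plan is to compute the weight-space decomposition of $\mathcal{M}_D$ directly from its determinantal spanning set under the diagonal torus, and then pass to the dual character.

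First I would determine the action of $X = \diag(x_1, \ldots, x_n)$ on the indeterminates. Since the $B$-action is $f(Y) \cdot b = f(b^{-1}Y)$ and $X^{-1}Y$ rescales the $i$-th row of $Y$ by $x_i^{-1}$, each $y_{ij}$ is a weight vector of weight $x_i^{-1}$. Writing $C_j = \{c_1 < \cdots < c_s\}$ and $D_j = \{d_1 < \cdots < d_s\}$, the Leibniz expansion
\[
\det\bigl(Y_{D_j}^{C_j}\bigr) = \sum_{\sigma \in S_s} \sgn(\sigma)\, \prod_{k=1}^s y_{c_k, d_{\sigma(k)}}
\]
shows that every term uses exactly the row indices in $C_j$, so this determinant is a weight vector of weight $\prod_{i \in C_j} x_i^{-1}$. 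Consequently each generator $g_C \coloneqq \prod_{j=1}^n \det\bigl(Y_{D_j}^{C_j}\bigr)$ is a weight vector of weight $\prod_{j=1}^n \prod_{i \in C_j} x_i^{-1}$.

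Next I would deduce both containments. Because $\mathcal{M}_D$ is spanned by the $g_C$ for $C \le D$, every weight of $\mathcal{M}_D$ is of the claimed form, giving the "only monomials of this form appear" direction after substituting $x_i \mapsto x_i^{-1}$ to pass from $\mathrm{char}\,\mathcal{M}_D$ to $\chi_D = \mathrm{char}^*\mathcal{M}_D$. For the converse direction I would show that for each $C \le D$ the generator $g_C$ is a nonzero element of $\mathbb{C}[Y]$, so its weight space is nontrivial and the corresponding monomial actually appears in $\chi_D$. Upper-triangularity of $Y$ together with the hypothesis $C_j \le D_j$ gives $c_k \le d_k$ for every $k$, so the identity-permutation term $\prod_k y_{c_k,d_k}$ in each $\det\bigl(Y_{D_j}^{C_j}\bigr)$ is a nonzero monomial; algebraic independence of the $y_{ij}$ prevents any cancellation with the other permutation terms, and a product of nonzero polynomials in $\mathbb{C}[Y]$ is nonzero.

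The only subtlety I anticipate is that distinct $C, C' \le D$ may produce identical total weights, so the weight space need not be one-dimensional and apparent cancellations among several $g_C$'s could worry us. This, however, never causes a weight to disappear: for any \emph{single} $C \le D$ the generator $g_C$ already lies in that weight space and is itself nonzero, so the weight space is nontrivial regardless of the behavior of the full linear combination. Thus every element of $\{\prod_{j=1}^n \prod_{i \in C_j} x_i \mid C \le D\}$ appears as a monomial of $\chi_D$, and no others do.
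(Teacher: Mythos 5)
Your argument is correct. Since the paper simply cites this result from \cite{FMS} without reproducing its proof, there is no in-paper argument to compare against, but your derivation is the natural one and it is sound. You correctly compute that under $f(Y)\cdot b = f(b^{-1}Y)$ each $y_{ij}$ has $X$-weight $x_i^{-1}$, so $g_C = \prod_j \det(Y_{D_j}^{C_j})$ is a weight vector of weight $\prod_{j}\prod_{i\in C_j} x_i^{-1}$; since the spanning vectors of $\mathcal{M}_D$ are weight vectors, only weights of this shape can occur, and dualizing flips signs of exponents to give the claimed monomials. The key point you correctly isolate is that a monomial actually appears in $\chi_D$ iff the corresponding weight space is nonzero, and because the coefficient is a \emph{dimension} there is no possibility of cancellation: it suffices that one spanning vector in that weight space is nonzero. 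Your nonvanishing argument is also right: $C_j \le D_j$ gives $c_k\le d_k$, so the identity term of $\det(Y_{D_j}^{C_j})$ is a genuine monomial in the $y_{ij}$, and since distinct permutations $\sigma$ produce distinct monomials $\prod_k y_{c_k,d_{\sigma(k)}}$ (the row multiset and column multiset together determine the matching), the identity term cannot be cancelled; $\mathbb{C}[Y]$ being an integral domain then gives $g_C\ne 0$. This matches the observation the paper records right after the definition of $\mathcal{M}_D$, that $\det(Y_{D_j}^{C_j}) \ne 0$ precisely when $C_j \le D_j$.
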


	\begin{theorem}[\cite{zeroone}]
		\label{cor:fms}
		Let $D\subseteq [n]\times [n]$ be a diagram. Fix any diagram $C^{(1)}\leq D$ and set \[\bm{m}=\prod_{j=1}^{n}\prod_{i\in C^{(1)}_j}x_i.\] 
		Let $C^{(1)}, \ldots, C^{(r)}$ be all the diagrams $C$ such that $C\leq D$ and $\prod_{j=1}^{n}\prod_{i\in C_j}x_i=\bm{m}$. Then, the coefficient of $\bm{m}$ in $\chi_D$ is equal to 
		\[[\bm{m}]\chi_D =\dim \left(\mathrm{Span}_\mathbb{C}\left\{\prod_{j=1}^{n}\det\left(Y_{D_j}^{C^{(i)}_j}\right) \ \middle|\  i\in [r] \right\}\right).\] 
		In particular, 
		$$[\bm{m}]\chi_D \leq \#\left\{ C \le D \;\middle|\; \prod_{(i, j) \in C} x_i = \mathbf{m} \right\}.$$ 
	\end{theorem}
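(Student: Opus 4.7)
The plan is to exploit the fact that each spanning element $f_C := \prod_{j=1}^{n} \det(Y_{D_j}^{C_j})$ of $\mathcal{M}_D$ is a simultaneous eigenvector for the torus of diagonal matrices acting on $\mathcal{M}_D$. This will let me immediately read off the dual character as a weighted count of the $f_C$'s, up to linear dependencies, and in particular extract the coefficient of $\bm{m}$.

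First I would compute the action of $X = \diag(x_1, \ldots, x_n)$ on $f_C$. Since $X^{-1}Y$ has $(i,j)$-entry equal to $x_i^{-1}y_{ij}$, the determinant $\det(Y_{D_j}^{C_j})$ is an alternating sum of monomials each of which contains exactly one factor from row $i$ for every $i \in C_j$; consequently
\begin{equation*}
\det(Y_{D_j}^{C_j}) \cdot X = \left(\prod_{i \in C_j} x_i^{-1}\right) \det(Y_{D_j}^{C_j}),
\end{equation*}
and multiplying over $j$ yields $f_C \cdot X = \left(\prod_{(i,j) \in C} x_i\right)^{-1} f_C$. Thus $f_C$ is a weight vector with weight $\bm{m}_C^{-1}$, where $\bm{m}_C := \prod_{(i,j) \in C} x_i$.

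Next, since $\mathcal{M}_D$ is spanned by $\{f_C : C \le D\}$, the $X$-weight space of $\mathcal{M}_D$ with weight $\bm{m}^{-1}$ is precisely the span of those $f_C$ whose monomial $\bm{m}_C$ equals $\bm{m}$; call this span $V_{\bm{m}}$. Taking the trace of $X$ on the dual $\mathcal{M}_D^*$ (equivalently, substituting $x_i \mapsto x_i^{-1}$ in the character of $\mathcal{M}_D$) gives
\begin{equation*}
\chi_D = \mathrm{char}^*(\mathcal{M}_D) = \sum_{\bm{m}} \dim(V_{\bm{m}}) \cdot \bm{m},
\end{equation*}
which yields exactly the claimed identity $[\bm{m}]\chi_D = \dim V_{\bm{m}} = \dim \mathrm{Span}_\mathbb{C}\{f_{C^{(i)}} : i \in [r]\}$.

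The ``in particular'' inequality is then immediate, since the dimension of the span of $r$ vectors is at most $r$. There is no serious obstacle in this argument; the only subtlety is checking the sign/weight convention carefully so that the dual character picks up $\bm{m}$ rather than $\bm{m}^{-1}$, which is why I would record the torus weight of $f_C$ explicitly before passing to the dual.
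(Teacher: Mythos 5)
Your argument is correct. The paper cites this result from \cite{zeroone} without reproducing a proof, but your weight-space argument is exactly the standard one used there: each $f_C$ is a simultaneous eigenvector for the diagonal torus with weight $\bm{m}_C^{-1}$, the weight space for a given weight is the span of the $f_C$'s of that weight (since $\mathcal{M}_D$ is spanned by weight vectors), and passing to the dual character inverts the variables so the coefficient of $\bm{m}$ in $\chi_D$ is $\dim V_{\bm{m}^{-1}}=\dim\mathrm{Span}_\C\{f_{C^{(i)}}:i\in[r]\}$.
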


	In light of the last inequality,  it is  natural to wonder when equality holds.  This is what Fan \& Guo \cite{FG} did:

\begin{theorem}[\cite{FG}]
\label{thm:fg}
Given a diagram $D \subseteq [n] \times [n]$, let
\[
x^D = \prod_{(i, j) \in D} x_i.
\]
Then, for a permutation $w \in S_n$,
\[
\Schub_w(x_1, \dots, x_n) = \sum_{C \le D(w)} x^C
\]
if and only if $w$ avoids the patterns $1432$ and $1423$.
\end{theorem}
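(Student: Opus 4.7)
The plan is to exploit Theorem \ref{cor:fms}, which identifies the coefficient $[\mathbf{m}]\chi_{D(w)}$ of a monomial $\mathbf{m}$ in $\Schub_w = \chi_{D(w)}$ with the dimension of the span of the products of column-minor determinants $\prod_{j=1}^{n}\det(Y_{D(w)_j}^{C_j})$, taken over all diagrams $C \le D(w)$ with $x^C = \mathbf{m}$, while the trivial upper bound in the same theorem simply counts the generators of this span. Hence the desired identity $\Schub_w = \sum_{C \le D(w)} x^C$ holds if and only if, for every monomial $\mathbf{m}$, these generating determinants are linearly independent.

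For the \emph{only if} direction, I would first verify by direct calculation that the equality fails for $w = 1432$ and $w = 1423$: compute the Rothe diagram in each case, enumerate the subdiagrams $C \le D(w)$, and exhibit a monomial $\mathbf{m}$ at which $[\mathbf{m}]\Schub_w$ is strictly less than the number of $C$'s producing $\mathbf{m}$. To extend this to any $\sigma$ containing $1432$ or $1423$ as a pattern, I would iterate Theorem \ref{thm:fms}: each application reduces $\sigma$ to a pattern on one fewer letter, and the coefficient-wise inequality $\Schub_\sigma - M_{\sigma,\pi}\Schub_\pi \in \Z_{\ge 0}[\mathbf{x}]$, together with a careful accounting of how subdiagrams of $D(\sigma)$ restrict to subdiagrams of $D(\pi)$ under the row/column deletion, should show that a bad monomial for the pattern lifts to a bad monomial for $\sigma$.

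For the \emph{if} direction, the plan is induction (say on $n$ or on $|D(w)|$), using that avoiding $1432$ and $1423$ forces the Rothe diagram into a shape rigid enough to make the relevant generating determinants linearly independent. Concretely, for a fixed monomial $\mathbf{m}$ I would introduce a total order on the diagrams $C \le D(w)$ with $x^C = \mathbf{m}$ (for instance, reading their column sets lexicographically) and fix a monomial order on $\C[Y]$ so that the diagonal term $\prod_j \prod_{i \in C_j} y_{i,j}$ is the leading term of $\prod_{j}\det(Y_{D(w)_j}^{C_j})$. The goal is then to show that the pattern avoidance hypothesis forces all these leading diagonal monomials to be distinct, which by a standard triangularity argument establishes linear independence.

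The main obstacle I anticipate lies in the \emph{if} direction: translating the combinatorial conditions of $1432$- and $1423$-avoidance into the algebraic statement that the chosen leading monomials are pairwise distinct. This is exactly the point at which one needs a structural characterization of how two distinct column selections $C, C' \le D(w)$ with $x^C = x^{C'}$ can arise, and to rule out the degeneracies produced by the two forbidden patterns. The \emph{only if} direction, by contrast, should be tractable via a finite check together with pattern-containment propagation.
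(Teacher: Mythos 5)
This statement is not proved in the paper at all: Theorem \ref{thm:fg} is imported verbatim from Fan and Guo \cite{FG}, so there is no internal argument for you to be compared against. Your proposal, read on its own merits, is a plausible outline but has two substantive gaps, and you have in fact put your finger on the more serious of the two.

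For the \emph{if} direction, the triangularity strategy via leading monomials of $\prod_j \det\bigl(Y_{D(w)_j}^{C_j}\bigr)$ is a natural attempt, but the leading monomial (the product of the diagonal terms $\prod_j \prod_i y_{c_{j,i},\, d_{j,i}}$) records only the multiset of pairs $(c,d)$ and not the column index $j$. Consequently, whenever $D(w)$ has two equal columns $D_j = D_{j'}$ admitting more than one admissible choice $C_j \le D_j$, one can swap $C_j$ and $C_{j'}$ to get a different diagram with literally the same product of determinants, so the leading monomials collide and linear independence fails. Any proof along your lines must therefore show that $1432$- and $1423$-avoidance precludes exactly these collisions, and more generally it must rule out every source of linear dependence among the products, not merely diagonal collisions. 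That is precisely the content you flag as the obstacle; it is not a minor loose end, it is the whole theorem.

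For the \emph{only if} direction, the two base cases $w=1432, 1423$ can indeed be checked directly (for $w=1432$, both $\{1,3\}\cup\{2\}$ and $\{2,3\}\cup\{1\}$ in columns $2,3$ give $x_1x_2x_3$ while $[x_1x_2x_3]\Schub_{1432}=1$). However, the propagation to any $\sigma$ containing one of these patterns is not a corollary of Theorem \ref{thm:fms} in the way you suggest: that theorem furnishes a coefficient-wise \emph{lower} bound $\Schub_\sigma \ge M_{\sigma,\pi}\Schub_\pi$, whereas you need to show that a strict deficit $[\mathbf m]\Schub_\pi < \#\{C' \le D(\pi) : x^{C'} = \mathbf m\}$ forces a strict deficit for some monomial of $\Schub_\sigma$. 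The lower bound goes the wrong way, and the lifting Lemma \ref{lem:lift} of this paper also runs in the opposite direction (dependence over $D$ implies dependence over $\widehat D$, not conversely). You would need a genuinely new lifting statement saying that a linear dependence among the $\widehat D$-determinants induces one among the $D$-determinants, and that is not something the machinery cited here provides.

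In short: the reduction to linear (in)dependence of determinant products via Theorem \ref{cor:fms} is the right framing and matches how the paper \emph{uses} the Fan--Guo theorem (see Corollary \ref{cor:counting}), but neither direction of your argument is complete, and the paper itself gives no proof to fall back on. To actually prove Theorem \ref{thm:fg} you would need the structural analysis carried out in \cite{FG}.
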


In particular, Theorem \ref{thm:fg} implies:

\begin{corollary}[\cite{FG}]
\label{cor:counting}
If $w \in S_n$ avoids the patterns $1432$ and $1423$, then the coefficient of $\bm{m}$ in $\Schub_w = \chi_{D(w)}$ is equal to
\[
[\bm{m}]\Schub_w=\# \left\{ C \le D(w) \;\middle|\; \prod_{(i, j) \in C} x_i = \bm{m} \right\}.
\]
\end{corollary}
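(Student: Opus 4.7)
The statement is an immediate consequence of Theorem \ref{thm:fg}, so the plan is simply to compare coefficients on both sides of its monomial identity. First, apply Theorem \ref{thm:fg}: since $w \in S_n$ avoids both $1432$ and $1423$,
$$\Schub_w(x_1, \dots, x_n) = \sum_{C \le D(w)} x^C, \qquad \text{where } x^C = \prod_{(i, j) \in C} x_i.$$
Group the diagrams appearing on the right according to the monomial they produce: for each monomial $\bm{m}$, let $\mathcal{C}(\bm{m}) = \{C \le D(w) \mid x^C = \bm{m}\}$. Each $C \in \mathcal{C}(\bm{m})$ contributes exactly one copy of $\bm{m}$ to the sum, so the coefficient of $\bm{m}$ on the right-hand side equals $\#\mathcal{C}(\bm{m})$. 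Reading off the coefficient of $\bm{m}$ on the left then yields the claimed identity.

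One may also read this corollary as the equality case of Theorem \ref{cor:fms}: for a general diagram $D$, one has the inequality $[\bm{m}]\chi_D \le \#\mathcal{C}(\bm{m})$, which can be strict because the spanning determinants $\prod_j \det(Y_{D_j}^{C_j})$ may satisfy nontrivial linear relations within a single monomial class. The substance of Theorem \ref{thm:fg} is precisely that for Rothe diagrams of $1432, 1423$-avoiding permutations no such relations exist, forcing equality for every $\bm{m}$. Because Theorem \ref{thm:fg} is taken as given, there is no genuine obstacle in deriving Corollary \ref{cor:counting} — the argument is a single coefficient extraction, and I would simply present it as such in one or two lines.
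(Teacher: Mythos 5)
Your argument is correct and matches the paper's intent exactly: the paper presents Corollary \ref{cor:counting} as an immediate consequence of Theorem \ref{thm:fg} (``In particular, Theorem \ref{thm:fg} implies''), and coefficient extraction from the monomial identity is precisely that derivation. The remark relating this to the equality case of Theorem \ref{cor:fms} is accurate but not needed for the proof itself.
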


	\section{Proof of Theorems \ref{thm:main}, \ref{cor:main} and \ref{thm:cw}}
	\label{sec:main}

In this section we prove Theorems \ref{thm:main}, \ref{cor:main} and \ref{thm:cw}. We start by giving the necessary definitions and lemmas. 

\subsection{Setup for Theorems \ref{thm:main}, \ref{cor:main}  and \ref{thm:cw}.}
\begin{definition}
For words $u, v$, we write $u \le v$ if $u$ is a subword of $v$ (and $u < v$ if $u \le v$ and $u \ne v$). In other words, $u \le v$ if there is a sequence $1 \le i_1 < \cdots < i_{\left|u\right|} \le \left|v\right|$ such that $u = v(i_1) \cdots v(i_{\left|u\right|})$. The empty word $()$ is a pattern in all words.
\end{definition}

\begin{example}
Let $u = 792$ and $v = 37952$. Then $u \le v$, because $u = v(2)\,v(3)\,v(5)$.
\end{example}

\begin{definition}
For a word $v$ of length $n$, let $i_1, i_2, \dots, i_n$ be indices such that $v(i_1) < v(i_2) < \cdots < v(i_n)$. Then $\perm(v)$ is the permutation that sends $i_j \mapsto j$, that is, $\perm(v) = (i_1i_2 \cdots i_n)^{-1}$. Equivalently, $\pi = \perm(v)$ is the permutation in $S_n$ such that the relative order of $\pi_1, \dots, \pi_n$ and of $v_1, \dots, v_n$ are the same.
\end{definition}

\begin{example}
Let $v = 37952$. Note $v(5) < v(1) < v(4) < v(2) < v(3)$, so $(i_1, i_2, i_3, i_4, i_5) = (5, 1, 4, 2, 3)$. Thus $\perm(v) = (51423)^{-1} = 24531$. Notice that we can obtain $\perm(v)$ from $v$ by replacing the smallest character of $v$ with $1$, the second smallest with $2$, and so on.
\end{example}

\begin{definition}
Let $w \in S_n$ and let $v$ be a subword of $w$. We define
\[
\mathbf{x}_{w^{-1}(v)} \coloneqq (x_{w^{-1}(v(1))}, x_{w^{-1}(v(2))}, \dots, x_{w^{-1}(v(\left|v\right|))}).
\]
\end{definition}

\begin{example}
Let $w = 134265$ and $v = 3265$. Then
\[
\mathbf{x}_{w^{-1}(v)} = (x_{w^{-1}(3)}, x_{w^{-1}(2)}, x_{w^{-1}(6)}, x_{w^{-1}(5)}) = (x_2, x_4, x_5, x_6).
\]
Notice that the resulting indices will always be in ascending order.  See Figure \ref{yellow} for an illustration. \end{example}

\begin{figure}[h!]
	\centering
	\includegraphics[width=0.6\textwidth]{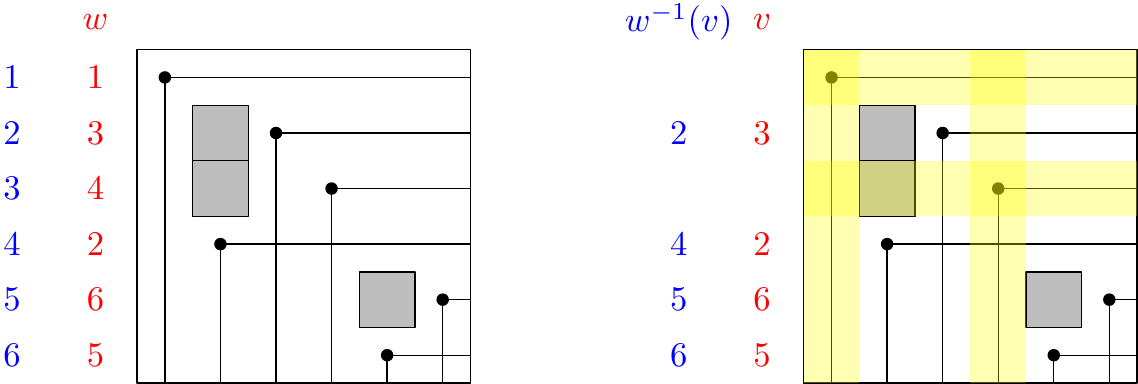}
	\caption{The left diagram is the Rothe diagram of the permutation $w = 134265$ (the permutation $w$ is noted in red to the left of the diagram). The row indices are noted in blue  to the left of the diagram. The right diagram shows the subword $v = 3265$ of   $w = 134265$ graphically: it is obtained by removing the yellow highlighted rows and columns from the Rothe diagram of $w$. The indices $w^{-1}(v)$ shown in blue to the left of the diagram are simply the row indices corresponding to this graphical presentation of the  subword $v = 3265$ of   $w = 134265$.}
	\label{yellow}
\end{figure}

\begin{definition}
Given a diagram $D \subseteq [n] \times [n]$ and sets of indices $K, L \subseteq [n]$ with $\#K = \#L$, let $\widehat{D}_{K, L}$ denote the diagram obtained from $D$ by keeping only the boxes in rows $K$ and columns $L$:
\[
\widehat{D}_{K, L} = \{ (i, j) \in D \mid i \in K, j \in L \}.
\]
\end{definition}

\begin{definition}
Suppose $C \le D(w)$ for some permutation $w \in S_n$. Then, for any subword $v \le w$, we define $\widehat{C}_v$ to be the diagram obtained by keeping only the boxes in the rows corresponding to $v$. That is, $\widehat{C}_v \coloneqq \widehat{C}_{K, L}$, where $L = \{v(1), v(2), \dots, v(\left|v\right|)\}$ and $K = \{w^{-1}(v(1)), w^{-1}(v(2)), \dots, w^{-1}(v(\left|v\right|))\}$.
\end{definition}

\subsection{Theorem \ref{thm:main} and its proof.}

\begin{theorem1*} 
\textit{Let $w \in S_n$ be a $1432$ and $1423$ avoiding permutation and let $u$ be a subword of $w$. Then}
\begin{equation} \label{eqn:f_w}
\sum_{u \le v \le w} (-1)^{\left|w\right|-\left|v\right|} M_{w, v} \Schub_{\perm(v)}(\mathbf{x}_{w^{-1}(v)}) \in \Z_{\ge0}[x_1, \dots, x_n],
\end{equation}
{\it where}
\begin{equation*}
M_{w, v} \coloneqq \prod_{(i, j) \in D(w) \setminus \widehat{D(w)}_v} x_i.
\end{equation*}
\end{theorem1*}

\begin{example} \label{ex1}
Let $w = 2143$ and $u = 43$. Then
\[
\{ v \mid u \le v \le w \} = \{ 2143, 143, 243, 43 \},
\]
so the alternating sum in (\ref{eqn:f_w}) becomes 
\begin{align} \label{?1}
&M_{w, 2143}\Schub_{2143}(x_1, x_2, x_3, x_4) - M_{w, 143}\Schub_{132}(x_2, x_3, x_4) - M_{w, 243}\Schub_{132}(x_1, x_3, x_4) + M_{w, 43}\Schub_{21}(x_3, x_4) \\ \nonumber
&\quad = 1 \cdot (x_1^2 + x_1x_2 + x_1x_3) - x_1 \cdot (x_2 + x_3) - x_1 \cdot (x_1 + x_3) + x_1 \cdot (x_3) \\ \nonumber
&\quad = 0,
\end{align}
which indeed has nonnegative coefficients. See Figure \ref{fig1} for an illustration.
\end{example}

\begin{figure}[h!]
	\centering
	\includegraphics[width=\textwidth]{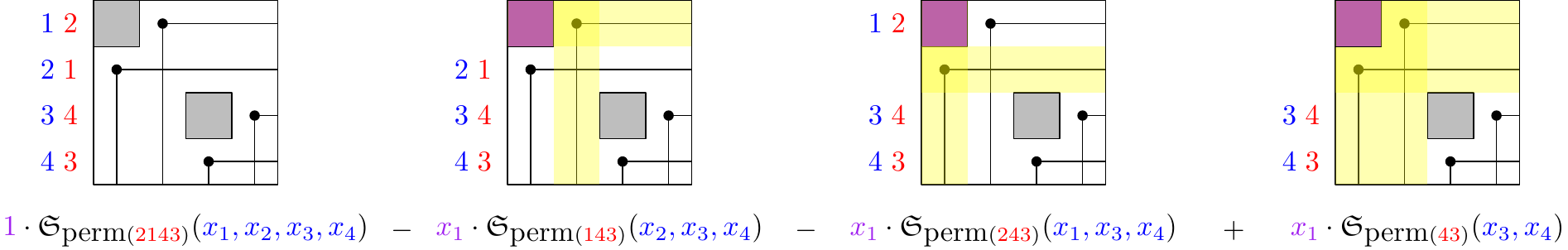}
	\caption{The four diagrams in this figure correspond left to right to the subwords $\{ v \mid u \le v \le w \} = \{ 2143, 143, 243, 43 \}$ for  $w = 2143$ and $u = 43$ as in Example \ref{ex1}. 
 These in turn yield the Schubert polynomials in the expression  \eqref{?1}. The red numbers on the left of the diagrams signify these subwords; the blue numbers are the row numbers yielding the variables of the corresponding Schubert polynomials in the expression  \eqref{?1}. The purple boxes correspond to the boxes of the Rothe diagram of $w = 2143$ that are removed in order to obtain $v$; graphically these are the boxes struck by yellow if the yellow highlighted rows and columns are extended; the row indices of these boxes yield the monomials $M_{w,v}$.}
	 \label{fig1}
\end{figure}

\begin{example} \label{ex2}
Let $w = 1342$ and $u = 42$. Then
\[
\{ v \mid u \le v \le w \} = \{ 1342, 142, 342, 42 \},
\]
so the alternating sum in (\ref{eqn:f_w}) becomes 
\begin{align} \label{?2}
&M_{w, 1342}\Schub_{1342}(x_1,x_2, x_3, x_4) - M_{w, 142}\Schub_{132}(x_1, x_3, x_4) - M_{w, 342}\Schub_{231}(x_2, x_3, x_4) + M_{w, 42}\Schub_{21}(x_3, x_4) \\ \nonumber
&\quad = 1 \cdot (x_1x_2 + x_1x_3 + x_2x_3) - x_2 \cdot (x_1 + x_3) - 1 \cdot (x_2 x_3) + x_2 \cdot (x_3) \\ \nonumber
&\quad = x_1x_3,
\end{align}
which indeed has nonnegative coefficients. See Figure \ref{fig2} for an illustration.
\end{example}

\begin{figure}[h!]
	\centering
	\includegraphics[width=\textwidth]{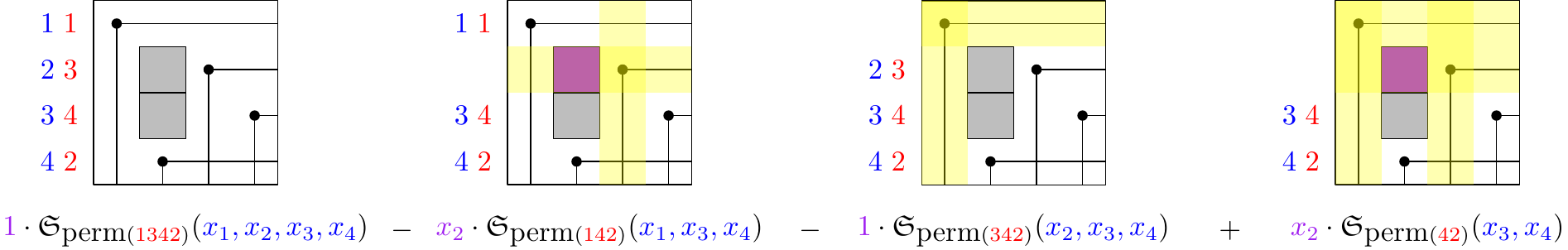}
	\caption{The four diagrams in this figure correspond left to right to the subwords $\{ v \mid u \le v \le w \} = \{ 1342, 142, 342, 42 \}$ for  $w = 1342$ and $u = 42$ as in Example \ref{ex2}. 
 These in turn yield the Schubert polynomials in the expression  \eqref{?2}. The red numbers on the left of the diagrams signify these subwords; the blue numbers are the row numbers yielding the variables of the corresponding Schubert polynomials in the expression  \eqref{?2}. The purple boxes correspond to the boxes of the Rothe diagram of $w = 1342$ that are removed in order to obtain $v$; graphically these are the boxes struck by yellow if the yellow highlighted rows and columns are extended; the row indices of these boxes yield the monomials $M_{w,v}$.}
\label{fig2}
\end{figure}

To aid the proof of Theorem \ref{thm:main} we extend Corollary \ref{cor:counting} to words:

\begin{lemma}
\label{lem:counting}
Let $w \in S_n$ be a $1432$ and $1423$ avoiding permutation, and let $v$ be a subword of $w$. Then the coefficient of $\mathbf{m}$ in $\Schub_{\perm(v)}(\mathbf{x}_{w^{-1}(v)})$ is equal to
\[
\# \left\{ C \le \widehat{D(w)}_v \;\middle|\; \prod_{(i, j) \in C} x_i = \mathbf{m} \text{ and $C$'s boxes all lie in rows $K$} \right\},
\]
where $K = \{ w^{-1}(v(1)), w^{-1}(v(2)), \dots, w^{-1}(v(\left|v\right|)) \}$.
\end{lemma}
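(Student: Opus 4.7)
The plan is to reduce the statement to Corollary \ref{cor:counting} applied to the pattern $\pi \coloneqq \perm(v) \in S_p$, where $p = \left|v\right|$. Setting $K = \{w^{-1}(v(1)) < \cdots < w^{-1}(v(p))\}$ and $L$ to be $\{v(1), \ldots, v(p)\}$ sorted in increasing order, the permutation $\pi$ is the pattern of $w$ occurring at positions $K$ with values $L$. Since avoidance of $1432$ and $1423$ is inherited by patterns, $\pi$ avoids $1432$ and $1423$, so Corollary \ref{cor:counting} gives
\[
[\mathbf{m}']\,\Schub_\pi(y_1, \ldots, y_p) = \#\!\left\{ C' \le D(\pi) \;\middle|\; \textstyle\prod_{(i,j) \in C'} y_i = \mathbf{m}' \right\}
\]
for every monomial $\mathbf{m}'$. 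I would then substitute $y_i = x_{w^{-1}(v(i))}$ on the left and match the right-hand side to the count asserted in the lemma.

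The technical heart of the argument, and the step I expect to be the main obstacle, is identifying $D(\pi)$ with $\widehat{D(w)}_v$ under the relabelings $a \colon [p] \to K$ and $b \colon [p] \to L$ (the unique order-preserving bijections). The key identities are $w(a(i)) = v(i) = b(\pi_i)$ and $w^{-1}(b(j)) = a(\pi^{-1}_j)$, both of which follow directly from the definitions of $K$, $L$, and $\pi = \perm(v)$. Using these to unpack the Rothe-diagram conditions yields
\[
(i, j) \in D(\pi) \iff i < \pi^{-1}_j \text{ and } j < \pi_i \iff a(i) < w^{-1}(b(j)) \text{ and } b(j) < w(a(i)) \iff (a(i), b(j)) \in D(w),
\]
so $(i, j) \mapsto (a(i), b(j))$ is a bijection from $D(\pi)$ onto $\widehat{D(w)}_v$.

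With this identification in hand, lifting to sub-diagrams is routine. Because $a$ and $b$ are order-preserving, they preserve the $\le$ relation on ordered subsets column-by-column: $C'_j \le D(\pi)_j$ if and only if $a(C'_j) \le \widehat{D(w)}_{v, b(j)}$. So the map $C' \mapsto C$, where $C_{b(j)} = a(C'_j)$ for $j \in [p]$ and $C_c = \emptyset$ for $c \notin L$, is a bijection from $\{ C' \le D(\pi) \}$ onto $\{ C \le \widehat{D(w)}_v : \text{the boxes of } C \text{ all lie in rows } K \}$. This bijection preserves the weight, since
\[
\textstyle\prod_{(i,j) \in C'} x_{w^{-1}(v(i))} = \prod_{(i,j) \in C'} x_{a(i)} = \prod_{(i', j') \in C} x_{i'}.
\]
Plugging these ingredients into the displayed identity from Corollary \ref{cor:counting} yields exactly the count claimed in the lemma.
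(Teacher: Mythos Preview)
Your proof is correct and follows the same approach as the paper's: apply Corollary~\ref{cor:counting} to $\pi=\perm(v)$ (which inherits the pattern avoidance from $w$), then transport diagrams via the order-preserving relabelings $[p]\to K$ and $[p]\to L$ to obtain the desired count. Your explicit verification of the equivalence $(i,j)\in D(\pi)\iff (a(i),b(j))\in D(w)$ and the column-by-column preservation of $\le$ is exactly the bijection the paper constructs, written out a bit more carefully.
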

\begin{proof}
Fix $\mathbf{m}$, and let
\[
A = \left\{ C \le \widehat{D(w)}_v \;\middle|\; \prod_{(i, j) \in C} x_i = \mathbf{m} \text{ and $C$'s boxes all lie in rows $K$} \right\}.
\]
If $\mathbf{m}$ is divisible by some $x_i$ where $i \notin K$, then the coefficient of $\mathbf{m}$ in $\Schub_{\perm(v)}(\mathbf{x}_{w^{-1}(v)})$ is $0$, and no diagram $C$ with boxes only in rows $K$ can ever satisfy $\prod_{(i, j) \in C} x_i = \mathbf{m}$, so $\left|A\right| = 0$ and we are done.

Let $k_1 < k_2 < \cdots < k_{\left|v\right|}$ be the elements of $K$. By the previous discussion, we may as well assume that we can write
\[
\mathbf{m} = \prod_{i = 1}^{\left|v\right|} x_{k_i}^{\alpha_i}
\]
for some nonnegative integers $\alpha_i$. Define
\[
\mathbf{m}' = \prod_{i = 1}^{\left|v\right|} x_i^{\alpha_i},
\]
which is simply $\mathbf{m}$ under the reindexing $x_{k_i} \mapsto x_i$. Since $w$ is 1432 and 1423 avoiding, so are $v$ and $\perm(v)$, thus by Corollary \ref{cor:counting}, the coefficient of $\mathbf{m'}$ in $\Schub_{\perm(v)}(x_1, \dots, x_{\left|v\right|})$ is equal to $\left|B\right|$, where
\[
B = \left\{ C \le D(\perm(v)) \;\middle|\; \prod_{(i, j) \in C} x_i = \mathbf{m'} \right\}.
\]

Consider the function $f \colon B \to A$ given by
\[
f(C) = \{ (k_i, v(j)) \mid (i, j) \in C \}.
\]
Notice that the boxes of $f(C)$ all lie in rows $K$, and since $\prod_{(i, j) \in C} x_i = \mathbf{m'}$, we have $\prod_{(i, j) \in f(C)} x_i = \mathbf{m}$. Furthermore, from the definition of Rothe diagrams, if $(i, j) \in D(\perm(v))$ then $(k_i, v(j)) \in D(w)$, so $f(D(\perm(v))) \le \widehat{D(w)}_v$. For $C, C' \in B$, observe that if $C \le C'$ then $f(C) \le f(C')$, so it follows that $f(C) \le \widehat{D(w)}_v$ for all $C \in B$ and $f$ is well-defined.

$f$ is clearly injective by construction. To see that it is surjective, note that if $C \in A$, then $C \le \widehat{D(w)}_v$, so every box in $C$ is of the form $(k_i, v(j))$ and the diagram
\[
C' = \{ (i, j) \mid (k_i, v(j)) \in C \}
\]
is easily seen to be a member of $A$, with $f(C') = C$. Therefore,
\begin{align*}
[\mathbf{m}] \Schub_{\perm(v)}(\mathbf{x}_{w^{-1}(v)}) &= [\mathbf{m}] \Schub_{\perm(v)}(x_{k_1}, x_{k_2}, \dots, x_{k_{\left|v\right|}}) \\
&= [\mathbf{m'}] \Schub_{\perm(v)}(x_1, x_2, \dots, x_{\left|v\right|}) \\
&= \left|B\right| \\
&= \left|A\right|.
\end{align*}
\end{proof}

\begin{proof}[Proof of Theorem \ref{thm:main}]
We must show that for every monomial $\mathbf{m}$,
\[
[\mathbf{m}]\sum_{u \le v \le w} (-1)^{\left|w\right|-\left|v\right|} M_{w, v} \Schub_{\perm(v)}(\mathbf{x}_{w^{-1}(v)}) \ge 0;
\]
equivalently,
\begin{equation}
\sum_{u \le v \le w} (-1)^{\left|w\right|-\left|v\right|} [\mathbf{m}] M_{w, v} \Schub_{\perm(v)}(\mathbf{x}_{w^{-1}(v)}) \ge 0.
\end{equation}

Fix $\mathbf{m}$. For any subword $v \le w$, let $K_v \coloneqq \{w^{-1}(v(1)), w^{-1}(v(2)), \dots, w^{-1}(v(\left|v\right|))\}$ (the `rows corresponding to $v$'). Using Lemma \ref{lem:counting}, we find that
\begin{align*}
[\mathbf{m}] M_{w, v} \Schub_{\perm(v)}(\mathbf{x}_{w^{-1}(v)}) &= \left[\frac{\mathbf{m}}{M_{w, v}}\right] \Schub_{\perm(v)}(\mathbf{x}_{w^{-1}(v)}) \\
&= \# \left\{ C \le \widehat{D(w)}_v \;\middle|\; \prod_{(i, j) \in C} x_i = \frac{\mathbf{m}}{M_{w, v}} \text{ and $C$'s boxes all lie in rows $K_v$} \right\}.
\end{align*}

Consider the two families of sets
\begin{align*}
A_v &\coloneqq \left\{ C \le \widehat{D(w)}_v \;\middle|\; \prod_{(i, j) \in C} x_i = \frac{\mathbf{m}}{M_{w, v}} \text{ and $C$'s boxes all lie in rows $K_v$} \right\}, \\
B_v &\coloneqq \left\{ C \le D(w) \;\middle|\; \prod_{(i, j) \in C} x_i = \mathbf{m}, C \setminus \widehat{C}_v = D(w) \setminus \widehat{D(w)}_v \text{ and $C$'s boxes all lie in rows $K_w$} \right\}.
\end{align*}

Since $v \le w$, $K_v \subseteq K_w$, and also the boxes of $D(w) \setminus \widehat{D(w)}_v$ all lie in rows $K_w \setminus K_v$. Thus, $D(w) \setminus \widehat{D(w)}_v$ is disjoint from every $C \in A_v$, and so there is an obvious injection $f$ from $A_v$ to $B_v$ defined by
\[
f(C) \coloneqq C \sqcup (D(w) \setminus \widehat{D(w)}_v).
\]
We claim $f$ is surjective. Indeed, given $C \in B_v$, the diagram $C' = C \setminus (D(w) \setminus \widehat{D(w)}_v)$ is easily seen to be a member of $A_v$, and of course $f(C') = C$.

Therefore,
\begin{equation}
[\mathbf{m}] M_{w, v} \Schub_{\perm(v)}(\mathbf{x}_{w^{-1}(v)}) = \left|A_v\right| = \left|B_v\right|,
\end{equation}
and so it suffices to show that
\begin{equation} \label{eqn:wts}
\sum_{u \le v \le w} (-1)^{\left|w\right|-\left|v\right|} \left|B_v\right| \ge 0.
\end{equation}

Notice that, if $u \le v \le v' \le w$, then $B_v \subseteq B_{v'}$, and for all $u \le v, v' \le w$, $B_u \cap B_v = B_{u \wedge v}$, where $u \wedge v$ denotes the maximal word contained in both $u$ and $v$. Let $I = \{ v \mid u \le v \le w \text{ and } \left|v\right| = \left|w\right| - 1 \}$. Then, using inclusion-exclusion, we find that
\begin{align}
\sum_{u \le v \le w} (-1)^{\left|w\right|-\left|v\right|} \left| B_v \right| &= \left|B_w\right| - \sum_{v_1 \in I} \left| B_{v_1} \right| + \sum_{v_1, v_2 \in I} \left| B_{v_1} \cap B_{v_2} \right| - \cdots \\
&= \left|B_w\right| - \left| \bigcup_{v \in I} B_v \right| \\
&= \left| B_w \setminus \bigcup_{v \in I} B_v \right|.
\end{align}
This quantity is necessarily non-negative, as desired.
\end{proof}

By setting all $x_i$'s to $1$ in  Theorem \ref{thm:main} we obtain:

\begin{theorem3*}  
\textit{Let $w \in S_n$ be a $1432$ and $1423$ avoiding permutation. If $u$ is a subword of $w$, then}
\[
\sum_{u \le v \le w} (-1)^{\left|w\right|-\left|v\right|} \Schub_{\textup{perm}(v)}({\bf 1})\geq 0.
\]
\end{theorem3*}

We conjecture (Conjecture \ref{conj:1.3}) that  Theorem \ref{cor:main}  generalizes to all permutations $w \in S_n$. 

\subsection{Gao's conjecture \ref{conj:gao}, Theorem \ref{thm:cw} and its proof.}
\label{sec:gao}
	Gao \cite{Gao}
defined a sequence of integers $\{c_u\}_{m \ge 1, u \in S_m}$ recursively, as follows:
\begin{equation} \label{eqn:c_w_defn}
c_w \coloneqq \Schub_w(\mathbf{1}) - 1 - \sum_{\left|u\right| < \left|w\right|} c_u p_u(w),
\end{equation}
where $\left|u\right| = m$ if $u \in S_m$, and $p_u(w)$ is the number of occurrences of $u$ as a pattern in $w$.

Gao
showed that $c_w = 0$ whenever $w(n) = n$, so the definition of $c_w$ can be extended to all $w \in S_\infty$. In the same paper, he conjectured the following:
\begin{conjecture}(\cite[Conjecture 3.2]{Gao}) \label{conj:gao2}
We have $c_w \ge 0$ for all $w \in S_\infty$.
\end{conjecture}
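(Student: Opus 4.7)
The plan is to bootstrap Theorem~\ref{cor:main} from $1432$- and $1423$-avoiding permutations to all permutations, by allowing greater flexibility in the monomial data of Theorem~\ref{thm:main}. Since $c_w = \sum_{v \le w}(-1)^{|w|-|v|}\Schub_{\perm(v)}(\mathbf{1})$ by equation~(\ref{eq:cw}), any such extension of Theorem~\ref{cor:main} immediately yields the conjecture once all $x_i$ are specialized to $1$.

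The first step is to aim for a polynomial upgrade: for every $w \in S_n$, produce monomials $\{M_{w,v}\}_{v \le w}$ such that $\sum_{v\le w}(-1)^{|w|-|v|} M_{w,v}\,\Schub_{\perm(v)}(\mathbf{x}_{w^{-1}(v)})$ lies in $\mathbb{Z}_{\ge 0}[x_1,\ldots,x_n]$. Theorem~\ref{thm:gen} already supplies a rich family of admissible ``one-step'' monomials, indexed by diagrams $K \in \Purple_{k,\sigma_k}(D(\sigma))$. I would try to build $M_{w,v}$ by iterating Theorem~\ref{thm:gen} along any maximal chain $w = w^{(n)} > w^{(n-1)} > \cdots > w^{(|v|)} = v$ of single-row-and-column deletions, and then verify that the resulting monomial is chain-independent and coherent across all $v \le w$ in a way that permits comparing different subwords.

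To mimic the proof of Theorem~\ref{thm:main}, one next wants a combinatorial model $|B_v^{(\mathbf{m})}| = [\mathbf{m}]\,M_{w,v}\,\Schub_{\perm(v)}(\mathbf{x}_{w^{-1}(v)})$ in which $B_v \subseteq B_{v'}$ whenever $v \le v'$, and $B_v \cap B_{v'} = B_{v \wedge v'}$. The alternating sum then telescopes to $|B_w \setminus \bigcup_i B_{v_i}| \ge 0$, just as in the proof of Theorem~\ref{thm:main}. In the $1432, 1423$-avoiding case this model is exactly the set of sub-diagrams of $\widehat{D(w)}_v$, by Fan--Guo. In general one would hope to replace sub-diagrams by a decorated variant --- sub-diagrams together with ``witnesses'' certifying nonvanishing of the corresponding product of minors in the flagged Weyl module $\mathcal{M}_{D(w)}$.

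The main obstacle is that Corollary~\ref{cor:counting} genuinely fails outside the $1432, 1423$-avoiding class: Theorem~\ref{cor:fms} presents $[\mathbf{m}]\Schub_w$ as a dimension of a span of products of minors, which can be strictly less than the naive diagram count because of straightening relations among those minors. Any candidate combinatorial model must therefore account for these relations and still be closed under the two restriction operations --- row-and-column deletion, and monomial specialization --- needed to make $B_v \subseteq B_{v'}$. I expect the design and verification of such a model, ideally drawing on pipe dreams, prism tableaux, or the Magyar determinantal construction, to be the hardest part of the program; matching the resulting enriched count with the purple-diagram monomials supplied by Theorem~\ref{thm:gen} is likely to dictate the correct choice of $M_{w,v}$ in the first place.
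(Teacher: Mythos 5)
You have not produced a proof of Conjecture~\ref{conj:gao2}, and indeed the paper itself does not prove it in full generality --- it remains open. The paper proves the conjecture only for $1432$- and $1423$-avoiding permutations (Theorem~\ref{thm:conj}), which follows from Theorem~\ref{cor:main} and the inclusion-exclusion identity~(\ref{eq:cw}). What you have written is a research program, not a proof, and you acknowledge as much (``I would try to build $M_{w,v}$\ldots'', ``I expect the design and verification\ldots to be the hardest part'').

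To be concrete about where your argument stops short of a proof: you have not shown that an iteration of Theorem~\ref{thm:gen} along a maximal deletion chain produces a chain-independent monomial $M_{w,v}$; you have not constructed the combinatorial model $B_v$ in the non-avoiding case (and in fact you correctly observe that the naive diagram count fails there because of straightening relations among minors); and you have not verified the two structural properties $B_v \subseteq B_{v'}$ for $v \le v'$ and $B_v \cap B_{v'} = B_{v \wedge v'}$ on which the entire telescoping argument hangs. Each of these is a genuine obstruction, not a routine verification. In fact your program is essentially the paper's own Problem~\ref{prob:gen}, which is explicitly posed as open, with Conjecture~\ref{conj:16} and Problem~\ref{prob:char} aimed at the monomial-selection issue you identify. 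Your proposal correctly identifies the difficulties and is well aligned with what the authors think the right approach might be, but it does not resolve the conjecture; at best it would, if carried out, reprove Theorem~\ref{thm:conj} in the $1432$, $1423$-avoiding case, which is the only case currently known.
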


Notice that $p_u(w) = \# \{\text{words $v$ such that $u = \perm(v)$ and $v \le w$}\}$. Thus, we can rewrite (\ref{eqn:c_w_defn}) as
\begin{equation} \label{eqn:c_w_defn2}
c_w = \Schub_w({\bf 1}) - \sum_{v < w} c_v,
\end{equation}
where the $-1$ has been absorbed into the sum as $c_{()}$. Note that this perspective explains the equivalence of  Conjectures \ref{conj:gao} and \ref{conj:gao2}.

By inclusion-exclusion, (\ref{eqn:c_w_defn2}) is equivalent to
\begin{equation} \label{eqn:c_w_defn3}
c_w = \sum_{v \le w} (-1)^{\left|w\right|-\left|v\right|} \Schub_v({\bf 1}).
\end{equation}

Thus, Theorem \ref{cor:main} immediately implies:

\begin{theorem} \label{thm:conj} Conjecture \ref{conj:gao2} (equivalently, Conjecture \ref{conj:gao})  holds for $1432$ and $1423$ avoiding permutations  $w \in S_{\infty}$. \end{theorem}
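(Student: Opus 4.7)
The plan is to apply Theorem \ref{cor:main} with $u$ taken to be the empty subword. Since $()$ is a subword of every word, the condition $u \le v \le w$ in (\ref{eqn:cw}) collapses to $v \le w$, and Theorem \ref{cor:main} immediately gives, for any $1432$ and $1423$ avoiding $w \in S_\infty$,
\[
\sum_{v \le w} (-1)^{|w|-|v|} \Schub_{\perm(v)}(\mathbf{1}) \ge 0.
\]

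Next I would identify this alternating sum with Gao's coefficient $c_w$. Starting from the recursion (\ref{eqn:c_w_defn}) and using the observation $p_u(w) = \#\{v \le w : \perm(v) = u\}$, one rewrites (\ref{eqn:c_w_defn}) as (\ref{eqn:c_w_defn2}), namely $c_w = \Schub_w(\mathbf{1}) - \sum_{v < w} c_{\perm(v)}$. A standard M\"obius inversion on the subword poset of $w$, recorded in the paper as (\ref{eqn:c_w_defn3}), produces
\[
c_w = \sum_{v \le w} (-1)^{|w|-|v|} \Schub_{\perm(v)}(\mathbf{1}).
\]
Combining the two displays gives $c_w \ge 0$ for every $1432, 1423$ avoiding $w$, establishing Conjecture \ref{conj:gao2} (equivalently Conjecture \ref{conj:gao}) in this case.

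There is no substantive obstacle in the deduction itself: all combinatorial work has already been done in Theorem \ref{thm:main} (whose specialization at $\mathbf{x} = \mathbf{1}$ is Theorem \ref{cor:main}, proved via the $A_v \leftrightarrow B_v$ bijection and inclusion-exclusion on the $B_v$'s) and in the inclusion-exclusion that converts Gao's recursion into (\ref{eqn:c_w_defn3}). The only bookkeeping point worth checking while writing the proof up is that both (\ref{eqn:cw}) and (\ref{eqn:c_w_defn3}) index the Schubert polynomial by $\perm(v)$ when $v$ ranges over subwords of $w$, so the two alternating sums line up term-by-term with no reindexing. Given that, the argument is a one-line citation of Theorem \ref{cor:main}.
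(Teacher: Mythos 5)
Your proposal is correct and follows the paper's own route exactly: specialize Theorem \ref{cor:main} to $u=()$, identify the alternating sum with $c_w$ via the inclusion-exclusion identity (\ref{eqn:c_w_defn3}), and conclude $c_w \ge 0$. The bookkeeping point you raise about indexing by $\perm(v)$ is a real (if minor) notational slip in the paper's (\ref{eqn:c_w_defn3}), and you resolve it the right way.
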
 

	Moreover, Theorem \ref{thm:cw} below provides a combinatorial interpretation for $c_w$ when $w$ is $1432$ and $1423$ avoiding. 

\begin{definition}
	Given diagrams $C,D\subseteq [n]\times[n]$ and $k,l\in [n]$, let $\hc$ and  $\hd$ denote the diagrams obtained from $C$ and $D$ by removing any boxes in row $k$ or column $l$. 
	Fix a diagram $D$. For each diagram $\hc$, let its  {\bf augmentation} with respect to the diagram $D$ be:
	\[{\hc}_{\rm aug}=\hc \cup \{(k,i) \mid (k,i)\in D\} \cup \{(i,l) \mid (i,l) \in D \}\subseteq [n]\times [n].\]  
\end{definition}

By tracing the proof of Theorem \ref{thm:main} for the case $u = ()$, we can obtain an interpretation of the coefficient of $\mathbf{m}$ in
$\sum_{v \le w} (-1)^{\left|w\right| - \left|v\right|} M_{w, v}\Schub_v(\mathbf{x}_{w^{-1}(v)})$ in terms of augmentations of diagrams $\widehat{C} \le \widehat{D(w)}$. In particular, we readily obtain:

	\begin{theorem2*}  \textit{For $1432$ and $1423$ avoiding permutations  $w \in S_{\infty}$ the value of $c_w$ is the number of diagrams $C \le D(w)$ that cannot be written as $\widehat{C}_\textup{aug}$ for some} $\widehat{C} \le \widehat{D(w)}$. 
	\end{theorem2*}

	We conclude this section by illustrating Theorem \ref{thm:cw}  for permutations $1342$ and $12453$. 

\begin{example} Computation yields $c_{1342}=0$. We have that $D(1342)=\{(2,2), (3,2)\}$ and thus the diagrams $C^1=\{(2,2), (3,2)\}$, $C^2=\{(1,2), (3,2)\}$, $C^3=\{(1,2), (2,2)\}$ are all the diagrams $C\leq D(1342)$.  Note that $C^1=\widehat{C^1}_\textup{aug}$ with respect to $D(1342)$ with $k=3, l=4$ (or with $k=2, l=3$); $C^2=\widehat{C^2}_\textup{aug}$ with respect to $D(1342)$ with $k=3, l=4$; $C^3=\widehat{C^3}_\textup{aug}$ with respect to $D(1342)$ with $k=2, l=3$. Thus, the number of diagrams $C \le D(1342)$ that cannot be written as $\widehat{C}_\textup{aug}$ for some $\widehat{C} \le \widehat{D(1342)}$ is $0$ yielding $c_{1342}=0$.
\end{example}

\begin{example} Computation yields $c_{12453}=1$. We have that $D(12453)=\{(3,3), (4,3)\}$ and thus the diagrams $C^1 = \{(3,3), (4,3)\}$, $C^2 = \{(2,3), (4,3)\}$, $C^3 = \{(1,3), (4,3)\}$, $C^4 = \{(2,3), (3,3)\}$, $C^5 = \{(1,3), (3,3)\}$, $C^6 = \{(1,3), (2,3)\}$ are all the diagrams $C\leq D(12453)$.   Note that $C^1=\widehat{C^1}_\textup{aug}$ with respect to $D(12453)$ with $k=4, l=5$ (or with $k=3, l=4$); $C^2=\widehat{C^2}_\textup{aug}$ with respect to  $D(12453)$ with $k=4, l=5$; $C^3=\widehat{C^3}_\textup{aug}$ with respect to  $D(12453)$ with $k=4, l=5$; $C^4=\widehat{C^4}_\textup{aug}$ with respect to  $D(12453)$ with $k=3, l=4$;  $C^5=\widehat{C^5}_\textup{aug}$ with respect to  $D(12453)$ with $k=3, l=4$.  Note also that $C^6$ cannot be written as $\widehat{C^6}_\textup{aug}$ for some $\widehat{C^6} \le \widehat{D(12453)}$.  Thus, the number of diagrams $C \le D(12453)$ that cannot be written as $\widehat{C}_\textup{aug}$ for some $\widehat{C} \le \widehat{D(1342)}$ is $1$ yielding $c_{12453}=1$.

\end{example}

	\section{Proof of Theorem \ref{thm:gen}}
	\label{sec:gen}
	The main result of this section is a generalization of Theorems \ref{thm:fms} and \ref{thm:gen}:		 

\begin{theorem}\label{thm:gen1} 
Fix a diagram $D \subseteq [n] \times [n]$ and let $\widehat{D}$ be the diagram obtained from $D$ by removing any boxes in row $k$ or column $l$. If there is some diagram $K \in \Purple_{k, l}(D)$ such that
\[
M(x_1, \dots, x_n) = \prod_{(i, j) \in K} x_i,
\]
then  \[
\chi_D(x_1, \dots, x_n) - M(x_1, \dots, x_n) \chi_{\widehat{D}}(x_1, \dots, x_{k-1}, 0, x_{k+1}, \dots, x_n)  \in \Z_{\ge0}[x_1, \dots, x_n].\]
\end{theorem}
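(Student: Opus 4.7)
My plan is to prove Theorem \ref{thm:gen1} by reducing a coefficient-wise comparison of dual characters to a linear-algebra statement about products of column minors of the matrix $Y$, via Theorem \ref{cor:fms}. Fix a monomial $\mathbf{m}$. Theorem \ref{cor:fms} identifies $[\mathbf{m}]\chi_D$ with the dimension of the subspace $V_D(\mathbf{m}) \subseteq \C[Y]$ spanned by $\prod_j \det(Y_{D_j}^{C_j})$ as $C$ ranges over diagrams with $C \le D$ and $\prod_{(i,j) \in C} x_i = \mathbf{m}$. Setting $x_k = 0$ in $\chi_{\widehat{D}}$ restricts attention to diagrams $\widehat{C} \le \widehat{D}$ whose boxes all avoid row $k$, and the same result identifies $[\mathbf{m}/M]\chi_{\widehat{D}}(x_1,\dots,x_{k-1},0,x_{k+1},\dots,x_n)$ with the dimension of the analogous subspace $V_{\widehat{D}}(\mathbf{m}/M)$. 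It thus suffices to prove $\dim V_D(\mathbf{m}) \ge \dim V_{\widehat{D}}(\mathbf{m}/M)$ for every $\mathbf{m}$.

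The next step is the augmentation $\widehat{C} \mapsto \widehat{C} \sqcup K$, where $K \in \Purple_{k,l}(D)$ is the hypothesis of the theorem. The defining property of $\Purple_{k,l}(D)$ should be designed so that the boxes of $K$ all lie in row $k$ or column $l$ of $D$, so that $\widehat{C} \sqcup K \le D$ whenever $\widehat{C}$ is admissible, and so that the monomial of $\widehat{C} \sqcup K$ equals $\mathbf{m}$. In particular, $K$ must supply a box in row $k$ for each column $j \ne l$ with $k \in D_j$ (so that column sizes match), and $K_l \le D_l$ is some full-size subset of column $l$ of $D$. This produces an injection of admissible diagrams $\widehat{C}$ into admissible diagrams $C$.

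The main obstacle is to lift this injection to an inequality of dimensions: I need to show that if the unaugmented products $\prod_j \det(Y_{\widehat{D}_j}^{\widehat{C}^{(i)}_j})$ are linearly independent in $\C[Y]$, then so are the augmented products $\prod_j \det(Y_{D_j}^{(\widehat{C}^{(i)} \sqcup K)_j})$. My plan is to exploit the upper-triangular structure of $Y$. For column $l$, the augmented factor is $\det(Y_{D_l}^{K_l})$, a common nonzero polynomial in the variables $y_{\cdot,l}$ independent of $i$. For each column $j \ne l$ with $(k,j) \in K$, the diagonal variable $y_{kk}$ appears exactly once in the submatrix $Y_{D_j}^{\widehat{C}^{(i)}_j \cup \{k\}}$, namely as its $(k,k)$-entry; cofactor expansion along row $k$ shows that the coefficient of $y_{kk}$ in that determinant is $\pm \det(Y_{\widehat{D}_j}^{\widehat{C}^{(i)}_j})$, while all other summands contain $y_{k,c}$ for some $c > k$ in place of $y_{kk}$. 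Extracting the coefficient of the top-degree power of $y_{kk}$ from any putative linear relation among the augmented products therefore yields, after dividing out the nonzero common factor $\det(Y_{D_l}^{K_l})$, the analogous signed relation among the unaugmented products, which is forced to be trivial. Theorem \ref{thm:gen} then drops out of Theorem \ref{thm:gen1} with $D = D(\sigma)$ (so that $\widehat{D} = D(\pi)$ by hypothesis) and Theorem \ref{thm:kp}.
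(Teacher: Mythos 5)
Your overall strategy matches the paper's: reduce the coefficient inequality to a dimension comparison via Theorem~\ref{cor:fms}, augment each admissible $\widehat{C}$ by $K$ to map it into an admissible $C \le D$, factor out the common column-$l$ determinant $\det(Y^{K_l}_{D_l})$, and then extract a specific monomial in the $y$-variables from any putative linear relation among the augmented products to recover a relation among the unaugmented ones. That is exactly the route taken in the paper's Lemma~\ref{lem:lift}.

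However, there is a genuine gap stemming from your guess at the definition of $\Purple_{k,l}(D)$. You assume that ``the boxes of $K$ all lie in row $k$ or column $l$ of $D$'' and that ``$K$ must supply a box in row $k$ for each column $j\ne l$ with $k\in D_j$.'' That is only true for the \emph{base} element $D\setminus\widehat{D}_{k,l}$. The actual set $\Purple_{k,l}(D)$ is closed downward under the $\le$ order (restricted to boxes in $\purple_{k,l}(D)$), so in a given column $j_q$ the single box of $K$ is some $(k_q, j_q)$ with $k_q \le k$ but possibly $k_q < k$ and even in a column different from $l$ (see Example~\ref{p2}, where $(3,3)\in K$ while $k=l=4$). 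Consequently, the submatrix in column $j_q$ is $Y^{\widehat{C}^{(i)}_{j_q}\cup\{k_q\}}_{\widehat{D}_{j_q}\cup\{k\}}$, and the variable one must extract is $y_{k_q k}$, not $y_{kk}$: the entry $y_{kk}$ need not appear at all, since row $k$ may not be present in the augmented diagram. Your cofactor expansion ``along row $k$'' and the claim that the remaining summands contain $y_{k,c}$ for $c>k$ are therefore specific to the base case. The fix (expand along row $k_q$ and extract $\prod_q y_{k_q k}$, using that $k_q \le k$ keeps everything in the upper-triangular part) is exactly what the paper's Lemma~\ref{lem:lift} does, but as written your argument establishes Theorem~\ref{thm:gen1} only for $K = D\setminus\widehat{D}_{k,l}$, whereas the statement allows any $K\in\Purple_{k,l}(D)$.
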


Theorem \ref{thm:gen} is a special case of Theorem \ref{thm:gen1} when $D$ is a Rothe diagram of a permutation.

We now proceed to define the set of diagrams $\Purple_{k, l}(D)$ used in the statement of Theorem \ref{thm:gen1} above.

\begin{definition}
Fix a diagram $D \subseteq [n]\times [n]$ and integers $k, l \in [n]$. Define $\purple_{k, l}(D)$ to be the set of boxes $(i, j)$ such that:  
\begin{itemize}
\item there is some $C \le D$ such that $(i, j) \in C$, but
\item there is no $C \le D$ such that $\widehat{C}_{k, l} \le \widehat{D}_{k, l}$ and $(i, j) \in \widehat{C}_{k, l}$.
\end{itemize}
\end{definition}

\begin{definition}
Fix a diagram $D \subseteq [n]\times [n]$ and integers $k, l \in [n]$. Define $\Purple_{k, l}(D)$ to be the smallest set satisfying the following:
\begin{itemize}
\item $D \setminus \widehat{D}_{k, l} \in \Purple_{k, l}(D)$, and
\item if $K \in \Purple_{k, l}(D)$, $K' \le K$ and $K' \subseteq \purple_{k, l}(D)$, then $K' \in \Purple_{k, l}(D)$.
\end{itemize}
\end{definition}

\begin{example} \label{p1}
Let $D = D(15243)$, $k = 5$ and $l = 3$. Then $\purple_{k, l}(D) = \{(1, 3), (2, 3), (3, 3), (4, 3)\}$ and $\Purple_{k, l}(D) = \{\{(2, 3), (4, 3)\}, \{(1, 3), (4, 3)\}, \{(2, 3), (3, 3)\}, \{(1, 3), (3, 3)\}, \{(1, 3), (2, 3)\}\}$. As a result, the set of monomials $M$ produced by Theorem \ref{thm:gen} is $\{x_2x_4, x_1x_4, x_2x_3, x_1x_3, x_1x_2\}$. In this case, these are \textbf{all} the monomials $M$ for which
\[
\chi_D(x_1,x_2,x_3,x_4,x_5) - M(x_1,x_2,x_3,x_4,x_5) \chi_{\widehat{D}}(x_1,x_2,x_3,x_4,0) \in \Z_{\ge0}[x_1,x_2,x_3,x_4,x_5].
\] See Figure \ref{fig:p1} for an illustration.
\end{example}

\begin{figure}[h!]
	\centering
	\includegraphics[width=0.85\textwidth]{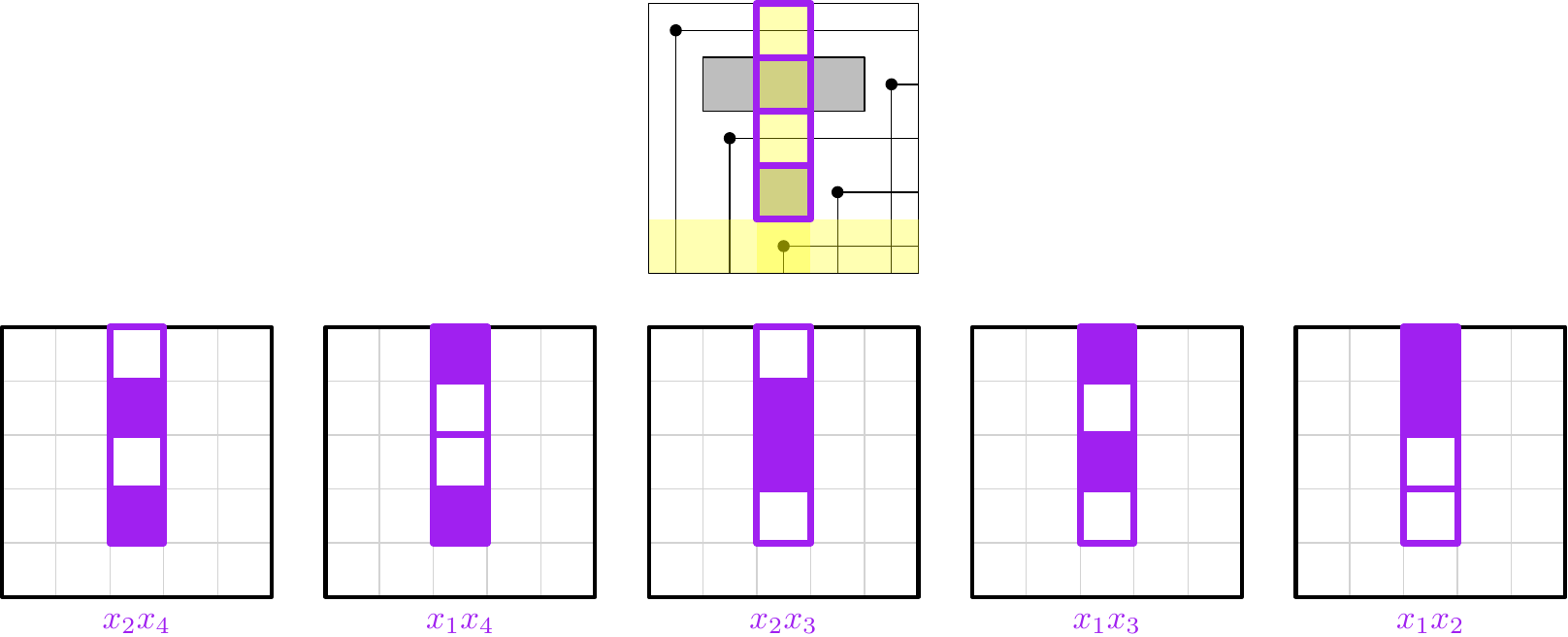}
	\caption{The diagram in the first row shows the Rothe diagram of the permutation $15243$. The yellow highlighted row and column correspond to removing row indexed $k=5$ and column indexed $l=3$. The boxes with purple boundary are $\purple_{k, l}(D) = \{(1, 3), (2, 3), (3, 3), (4, 3)\}$. The second row of the figure shows $\Purple_{k, l}(D) = \{\{(2, 3), (4, 3)\}, \{(1, 3), (4, 3)\}, \{(2, 3), (3, 3)\}, \{(1, 3), (3, 3)\}, \{(1, 3), (2, 3)\}\}$ along with the corresponding monomials below each diagram.}
	\label{fig:p1}
\end{figure}

\begin{example} \label{p2}
Let $D = D(15243)$ and $k = l = 4$. Then $\purple_{k, l}(D) = \{(1, 4), (2, 4), (3, 3), (4, 3)\}$ and $\Purple_{k, l}(D) = \{\{(2, 4), (4, 3)\}, \{(1, 4), (4, 3)\}, \{(2, 4), (3, 3)\}, \{(1, 4), (3, 3)\}\}$. As a result, the set of monomials $M$ produced by Theorem \ref{thm:gen} is $\{x_2x_4, x_1x_4, x_2x_3, x_1x_3\}$. However,
\[
\chi_D(x_1,x_2,x_3,x_4,x_5) - (x_1x_2) \chi_{\widehat{D}}(x_1,x_2,x_3,0,x_5) \in \Z_{\ge0}[x_1,x_2,x_3,x_4,x_5],
\]
so in this case the monomials prescribed by Theorem \ref{thm:gen} are not the only monomials that could work. See Figure \ref{fig:p2} for an illustration.
\end{example}

\begin{figure}[h!]
	\centering
	\includegraphics[width=0.67\textwidth]{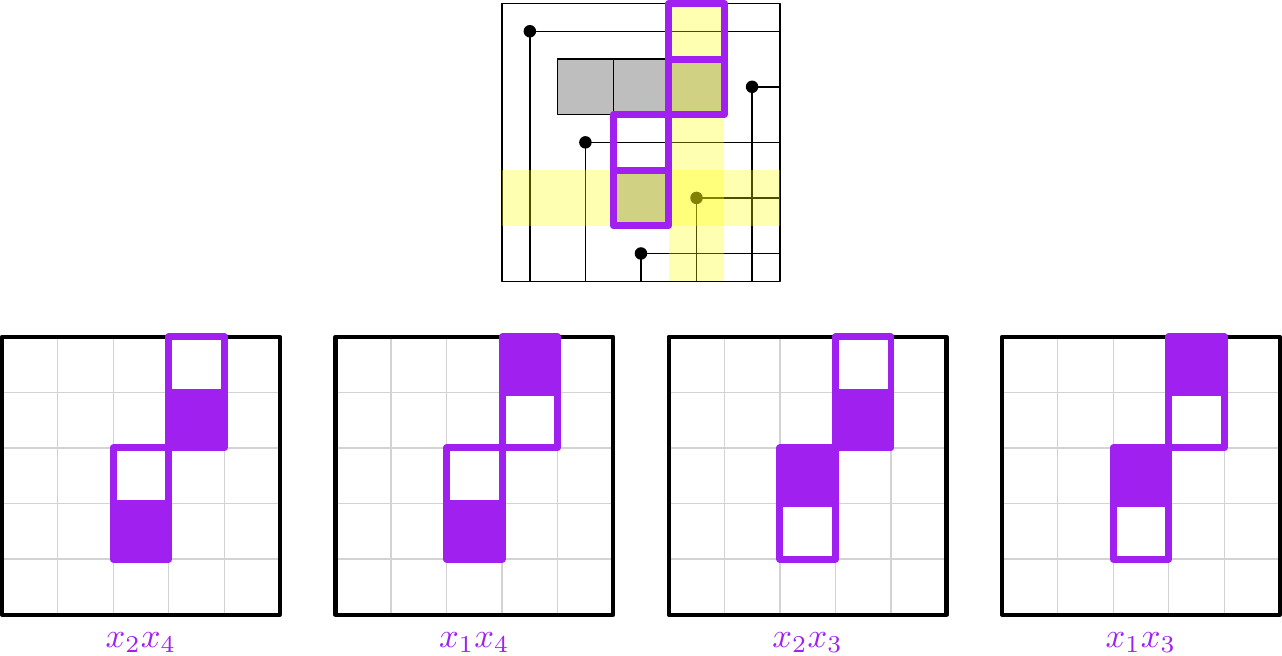}
	\caption{The diagram in the first row shows the Rothe diagram of the permutation $15243$. The yellow highlighted row and column correspond to removing row indexed $k=4$ and column indexed $l=4$. The boxes with purple boundary are $\purple_{k, l}(D) = \{(1, 4), (2, 4), (3, 3), (4, 3)\}$. The second row of the figure shows $\Purple_{k, l}(D) = \{\{(2, 4), (4, 3)\}, \{(1, 4), (4, 3)\}, \{(2, 4), (3, 3)\}, \{(1, 4), (3, 3)\}\}$ along with the corresponding monomials below each diagram.}
	\label{fig:p2}
\end{figure}

The following lemma follows immediately from the definitions:
\begin{lemma} \label{lem:aug}
Let $C, D \subseteq [n] \times [n]$ be diagrams, $k, l \in [n]$, and $K \in \Purple_{k, l}(D)$. If $\widehat{C}_{k, l} \le \widehat{D}_{k, l}$, then $\widehat{C}_{k, l} \cup K \le D$ and $\widehat{C}_{k, l} \cap K = \varnothing$.
\end{lemma}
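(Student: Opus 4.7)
The plan is to run an induction on the recursive definition of $\Purple_{k,l}(D)$, leaning on two routine monotonicity facts about the column order $\le$. The proof splits into three small bites: (a) every $K \in \Purple_{k,l}(D)$ sits inside $\purple_{k,l}(D)$; (b) the base case $K = K_0 \coloneqq D \setminus \widehat{D}_{k,l}$ of the main claim, which will also deliver disjointness; (c) the inductive step for arbitrary $K$.

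Step (a) is immediate from the definition of $\Purple_{k,l}(D)$: the base set $D \setminus \widehat{D}_{k,l}$ lies in $\purple_{k,l}(D)$ because each of its boxes sits in $D$ (hence in $C = D \le D$) and in row $k$ or column $l$, so it cannot belong to $\widehat{C'}_{k,l}$ for any $C'$; the inductive clause of $\Purple_{k,l}(D)$ then forces every $K \in \Purple_{k,l}(D)$ into $\purple_{k,l}(D)$.

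For step (b) I would verify $\widehat{C}_{k,l} \cup K_0 \le D$ column-by-column. Column $l$ is automatic since $(\widehat{C}_{k,l} \cup K_0)_l = D_l$, and for $j \ne l$ one splits into four short sub-cases according to whether $k \in C_j$ and $k \in D_j$, reducing everything to the elementary fact that if $A \le B$ as equal-size subsets of $[n]$ with $k \notin A \cup B$, then $A \cup \{k\} \le B \cup \{k\}$ (a short sorting argument tracking the insertion position of $k$). Setting $C'' \coloneqq \widehat{C}_{k,l} \cup K_0$ then gives a diagram with $C'' \le D$ and $\widehat{C''}_{k,l} = \widehat{C}_{k,l} \le \widehat{D}_{k,l}$ (the equality holds because $K_0$ is supported in row $k$ and column $l$). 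By the definition of $\purple_{k,l}(D)$, this witness forbids any box of $\widehat{C}_{k,l}$ from lying in $\purple_{k,l}(D)$, so step (a) yields $\widehat{C}_{k,l} \cap K = \varnothing$ for every $K \in \Purple_{k,l}(D)$.

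For step (c), suppose inductively that $\widehat{C}_{k,l} \cup K \le D$ and $K' \le K$ with $K' \subseteq \purple_{k,l}(D)$; disjointness from (b) makes $\widehat{C}_{k,l} \cup K$ and $\widehat{C}_{k,l} \cup K'$ genuine disjoint unions column-wise. Using the second elementary fact---if $A \cap B = A \cap B' = \varnothing$ and $B' \le B$ have the same size, then $A \cup B' \le A \cup B$ (immediate in each column from the threshold criterion $\#\{x \le t\}$)---we obtain $\widehat{C}_{k,l} \cup K' \le \widehat{C}_{k,l} \cup K \le D$ by transitivity. I do not expect any genuine obstacle: the only mildly delicate piece is the column-wise base case in (b), where one must be careful not to confuse set containment with the column order $\le$ (since $\widehat{D}_{k,l}$ is not in general $\le D$). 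The two auxiliary facts about $\le$ carry the burden the authors dispatch with the word ``immediately''.
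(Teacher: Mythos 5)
Your proof is correct and is the natural way to unwind the paper's assertion that the lemma ``follows immediately from the definitions'' (the authors give no written argument). The structural induction on the recursive definition of $\Purple_{k,l}(D)$---with the disjointness $\widehat{C}_{k,l}\cap\purple_{k,l}(D)=\varnothing$ extracted from the witness $C''=\widehat{C}_{k,l}\cup(D\setminus\widehat{D}_{k,l})$, and the two threshold-monotonicity facts about the column order $\le$---is exactly what that phrase is leaving to the reader.
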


The following lemma generalizes \cite[Lemma 5.7]{zeroone}:
\begin{lemma} \label{lem:lift}
Fix a diagram $D$, integers $k, l \in [n]$, $K \in \Purple_{k, l}(D)$, and let $\widehat{D}$ denote $\widehat{D}_{k, l}$. Let $\{\widehat{C}^{(i)}\}_{i \in [m]}$ be a set of diagrams with $\widehat{C}^{(i)} \le \widehat{D}$ for each $i$, and denote $\widehat{C}^{(i)} \cup K$ by $C^{(i)}$ for $i \in [m]$. If the polynomials $\left\{ \displaystyle\prod_{j \in [n]} \det(Y^{C^{(i)}_j}_{D_j}) \right\}_{i \in [m]}$ are linearly dependent, then so are the polynomials $\left\{ \displaystyle\prod_{j \in [n] \setminus \{l\}} \det(Y^{\widehat{C}^{(i)}_j}_{\widehat{D}_j}) \right\}_{i \in [m]}$.
\end{lemma}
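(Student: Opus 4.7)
The plan is to construct a specialization (ring homomorphism) $\phi \colon \C[Y] \to \C[\widehat{Y}]$ of the matrix variables so that, writing $P_i := \prod_{j \in [n]} \det(Y^{C^{(i)}_j}_{D_j})$ and $Q_i := \prod_{j \in [n] \setminus \{l\}} \det(Y^{\widehat{C}^{(i)}_j}_{\widehat{D}_j})$, one has $\phi(P_i) = f \cdot Q_i$ for a common nonzero factor $f \in \C[\widehat{Y}]$ that is independent of $i$. Given such a $\phi$, a linear dependence $\sum_i a_i P_i = 0$ yields $f \cdot \sum_i a_i Q_i = 0$ after applying $\phi$, and since $\C[\widehat{Y}]$ is an integral domain and $f \ne 0$, the desired dependence $\sum_i a_i Q_i = 0$ follows at once.

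To construct $\phi$, I would first isolate column $l$: because every $\widehat{C}^{(i)}$ avoids column $l$, we have $C^{(i)}_l = K_l$ for all $i$, so the $j = l$ factor $\det(Y^{K_l}_{D_l})$ is already common to every $P_i$ and can be absorbed into $f$. For each column $j \ne l$, the idea is to Laplace-expand $\det(Y^{\widehat{C}^{(i)}_j \cup K_j}_{D_j})$ along the rows prescribed by $K_j$, and to design $\phi$ to kill row-$k$ and column-$l$ variables of $Y$ strategically so that all but one term of the expansion survives, leaving $c_j(Y) \cdot \det(Y^{\widehat{C}^{(i)}_j}_{\widehat{D}_j})$ with $c_j$ depending only on $K$.

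To accommodate $K$ having ``purple'' boxes outside both row $k$ and column $l$ (as in Example \ref{p2}), I would induct on the recursive definition of $\Purple_{k, l}(D)$. The base case $K = D \setminus \widehat{D}_{k, l}$ is exactly the setting of Lemma~5.7 of \cite{zeroone}, handled by the specialization $y_{k, j} = 0$ for $j \ne l$, $y_{i, l} = 0$ for $i \ne k$, together with $y_{k, l} = 1$. At each inductive step $K \rightsquigarrow K'$, with $K' \le K$ and $K' \subseteq \purple_{k, l}(D)$, I would modify $\phi$ to swap out the relevant boxes of $K$ for those of $K'$, invoking Lemma~\ref{lem:aug} to ensure the augmented diagrams $\widehat{C}^{(i)} \cup K'$ remain $\le D$.

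The main obstacle will be showing that the common factor $f$ stays nonzero after specialization in the inductive steps, particularly when purple boxes lie in the interior of $\widehat{D}$. This is precisely where the defining property of $\purple_{k, l}(D)$ enters: because a purple box cannot belong to $\widehat{C}_{k, l}$ for any $C \le D$ satisfying $\widehat{C}_{k, l} \le \widehat{D}_{k, l}$, the specialization extracting its contribution does not collide with the surviving $\widehat{D}$-determinants, thereby preserving their algebraic independence. Making this precise --- and showing that $f$ factors as a product of nonzero minors coming from column $l$ together with one minor for each ``purple'' block --- is the technical heart of the argument.
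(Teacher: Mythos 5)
Your overall strategy---factor out the common column-$l$ determinant, then reduce each remaining factor from $\det(Y^{C^{(i)}_j}_{D_j})$ to $\det(Y^{\widehat C^{(i)}_j}_{\widehat D_j})$---matches the paper's opening move, but from there you diverge onto a route that has a genuine gap. The paper does \emph{not} induct on the recursive definition of $\Purple_{k,l}(D)$ and does \emph{not} use a ring homomorphism. Instead, after factoring out $\det(Y^{K_l}_{D_l})$, it handles a general $K\in\Purple_{k,l}(D)$ all at once by a \emph{linear coefficient extraction}: letting $j_1<\dots<j_p$ be the columns $j\ne l$ with $k\in D_j$ and $k_q$ the unique element of $K_{j_q}$, it applies the linear functional $\bigl[\prod_q y_{k_q\,k}\bigr]$ to the identity $\sum_i c_i\prod_{j\ne l}\det(Y^{C^{(i)}_j}_{D_j})=0$ and shows (via Laplace expansion in row $k_q$ of the $j_q$th factor) that this coefficient equals $\prod_{j\ne l}\det(Y^{\widehat C^{(i)}_j}_{\widehat D_j})$.

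Your specialization approach faces an obstruction that the coefficient extraction avoids: a ring homomorphism $\phi$ must assign a single value to each $y_{a k}$, but when the purple boxes $(k_q, j_q)$ have varying row indices $k_q$ you would need to keep $y_{k_q k}$ ``alive'' for every $q$. Since the $y_{k_q k}$ are global variables shared across all factors, nothing prevents $y_{k_{q'}k}$ from also appearing in the $j_q$th determinant (this happens whenever $k_{q'}\in\widehat C^{(i)}_{j_q}$), and then $\phi$ produces an unwanted sum of cofactors rather than a clean multiple of $\det(Y^{\widehat C^{(i)}_{j_q}}_{\widehat D_{j_q}})$. A coefficient extraction, being merely linear, does not demand such separation at the level of a substitution; this is precisely why the paper can dispatch general $K$ in one step with no induction. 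Your inductive step (``modify $\phi$ to swap out the relevant boxes of $K$ for $K'$'') is where the argument would break, and you correctly flag it as the technical heart but leave it unresolved.

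A smaller issue: your stated base-case specialization ($y_{k,j}=0$ for $j\ne l$, $y_{i,l}=0$ for $i\ne k$, $y_{k,l}=1$) targets row $k$ and column $l$ of $Y$, but the variables that actually need to be pinned down here live in \emph{column} $k$ of $Y$ (namely $y_{a k}$ with $a\le k$), because removing row $k$ from the diagram removes column $k$ from the submatrices $Y^{C^{(i)}_{j_q}}_{D_{j_q}}$. In particular your substitution would set $y_{kk}=0$ (when $k\ne l$), which kills the very entry one wants to retain, and replacing $D_{j_q}\setminus\{k\}$ by $D_{j_q}\setminus\{l\}$ is not what $\widehat D_{j_q}$ demands. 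Even in the base case the substitution should be along the lines $y_{a k}\mapsto 0$ for $a<k$ and $y_{kk}\mapsto 1$, \emph{after} the $\det(Y^{K_l}_{D_l})$ factor has already been removed so it is not inadvertently annihilated.
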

\begin{proof}
We are given that
\begin{equation} \label{lem:lift-eqn:1}
\sum_{i \in [m]} c_i \prod_{j \in [n]} \det(Y^{C^{(i)}_j}_{D_j}) = 0
\end{equation}
for some constants $(c_i)_{i \in [m]} \in \C^m$ not all zero. Since $C^{(i)} = \widehat{C}^{(i)} \cup K$ for $\widehat{C}^{(i)} \le \widehat{D}$ we have that $C^{(i)}_l = K_l$ for every $i \in [m]$. Thus, (\ref{lem:lift-eqn:1}) can be rewritten as
\begin{equation}
\det(Y^{K_l}_{D_l}) \left( \sum_{i \in [m]} c_i \prod_{j \in [n] \setminus \{l\}} \det(Y^{C^{(i)}_j}_{D_j}) \right) = 0.
\end{equation}

However, since $\det(Y^{K_l}_{D_l}) \ne 0$, we conclude that
\begin{equation} \label{lem:lift-eqn:3}
\sum_{i \in [m]} c_i \prod_{j \in [n] \setminus \{l\}} \det(Y^{C^{(i)}_j}_{D_j}) = 0.
\end{equation}

First consider the case that the only boxes of $D$ in row $k$ or column $l$ are those in $D_l$. If this is the case then
\begin{equation} 
\prod_{j \in [n] \setminus \{l\}} \det(Y^{\widehat{C}^{(i)}_j}_{\widehat{D}_j}) = \prod_{j \in [n] \setminus \{l\}} \det(Y^{C^{(i)}_j}_{D_j})
\end{equation}
for each $i \in [m]$. Therefore,
\begin{equation} \label{lem:lift-eqn:5}
\sum_{i \in [m]} c_i \prod_{j \in [n] \setminus \{l\}} \det(Y^{\widehat{C}^{(i)}_j}_{\widehat{D}_j}) = \sum_{i \in [m]} c_i \prod_{j \in [n] \setminus \{l\}} \det(Y^{C^{(i)}_j}_{D_j}).
\end{equation}

Combining (\ref{lem:lift-eqn:3}) and (\ref{lem:lift-eqn:5}) we obtain that the polynomials $\left\{ \prod_{j \in [n] \setminus \{l\}} \det(Y^{\widehat{C}^{(i)}_j}_{\widehat{D}_j}) \right\}_{i \in [m]}$ are linearly dependent, as desired.

Now, suppose that there are boxes of $D$ in row $k$ that are not in $D_l$. Let $j_1 < \dots < j_p$ be all indices $j \ne l$ such that $D_j = \widehat{D}_j \cup \{k\}$. Then, for each $i \in [m]$ and $q \in [p]$, $C^{(i)}_{j_q} \setminus \widehat{C}^{(i)}_{j_q} = K_{j_q}$. For each $q \in [p]$, let $k_q$ be the only element of $K_{j_q}$; then (\ref{lem:lift-eqn:3}) implies that
\begin{equation} \label{lem:lift-eqn:6}
\left[ \prod_{q \in [p]} y_{k_q k} \right] \sum_{i \in [m]} c_i \prod_{j \in [n] \setminus \{l\}} \det(Y^{C^{(i)}_j}_{D_j}) = 0.
\end{equation}

However,
\begin{equation}
\left[ \prod_{q \in [p]} y_{k_q k} \right] \prod_{j \in [n] \setminus \{l\}} \det(Y^{C^{(i)}_j}_{D_j}) = \prod_{j \in [n] \setminus \{l\}} \det(Y^{\widehat{C}^{(i)}_j}_{\widehat{D}_j}),
\end{equation}
as is seen by Laplace expansion on the $k_q$th row of $\det(Y^{C^{(i)}_{j_q}}_{D_{j_q}})$, and therefore
\begin{equation} \label{lem:lift-eqn:8}
\left[ \prod_{q \in [p]} y_{k_q k} \right] \sum_{i \in [m]} c_i \prod_{j \in [n] \setminus \{l\}} \det(Y^{C^{(i)}_j}_{D_j}) = \sum_{i \in [m]} c_i \prod_{j \in [n] \setminus \{l\}} \det(Y^{\widehat{C}^{(i)}_j}_{\widehat{D}_j}).
\end{equation}

Thus, (\ref{lem:lift-eqn:6}) and (\ref{lem:lift-eqn:8}) imply that
\begin{equation}
\sum_{i \in [m]} c_i \prod_{j \in [n] \setminus \{l\}} \det(Y^{\widehat{C}^{(i)}_j}_{\widehat{D}_j}) = 0,
\end{equation}
as desired.
\end{proof}

We are now ready to prove Theorem \ref{thm:gen1}:
\bigskip

\noindent{\it Proof of Theorem \ref{thm:gen1}} Let $M = M(x_1, \dots, x_n)$.
Suppose there is some $K \in \Purple_{k, l}(D)$ such that
\[
M(x_1, \dots, x_n) = \prod_{(i, j) \in K} x_i.
\]

We must show that $[M\mathbf{m}] \chi_D \ge [\mathbf{m}] \chi_{\widehat{D}}$ for each monomial $\mathbf{m}$ of $\chi_{\widehat{D}}$ not divisible by $x_k$. Let $\widehat{\mathcal{C}}$ be the set of diagrams $\widehat{C}$ such that $\widehat{C} \le \widehat{D}$ and $\prod_{(i, j) \in \widehat{C}} x_i = \mathbf{m}$. By Corollary 5.5,
\[
[\mathbf{m}]\chi_{\widehat{D}} = \dim \left( \textstyle \Span_\C \displaystyle \left\{ \prod_{j=1}^n \det(Y^{\widehat{C}_j}_{\widehat{D}_j}) \;\middle|\; \widehat{C} \in \widehat{\mathcal{C}} \right\} \right).
\]

Let $\mathcal{C} = \{ \widehat{C} \cup K \mid \widehat{C} \in \widehat{\mathcal{C}} \}$. By Lemma \ref{lem:aug}, every $C \in \mathcal{C}$ satisfies $C \le D$ and $\prod_{(i, j) \in C} x_i = M\mathbf{m}$, so Corollary 5.5 implies that
\[
[M\mathbf{m}]\chi_D \ge \dim \left( \textstyle \Span_\C \displaystyle \left\{ \prod_{j=1}^n \det(Y^{C^{(i)}_j}_{D_j}) \;\middle|\; C \in \mathcal{C} \right\} \right).
\]
Note the inequality, which is because we have only a subset of the $C$.

Finally, Lemma \ref{lem:lift} implies that
\[
\dim \left( \textstyle \Span_\C \displaystyle \left\{ \prod_{j=1}^n \det(Y^{C^{(i)}_j}_{D_j}) \;\middle|\; C \in \mathcal{C} \right\} \right) \ge \dim \left( \textstyle \Span_\C \displaystyle \left\{ \prod_{j=1}^n \det(Y^{\widehat{C}^{(i)}_j}_{\widehat{D}_j}) \;\middle|\; \widehat{C} \in \widehat{\mathcal{C}} \right\} \right),
\]
so $[M\mathbf{m}]\chi_D \ge [\mathbf{m}]\chi_{\widehat{D}}$ for each monomial $\mathbf{m}$ of $\chi_{\widehat{D}}$ not divisible by $x_k$.

\qed

\section{Problems and Conjectures}
\label{sec:conj}

The results of this paper naturally give rise to the following Conjectures and Problems. 

\subsection{Extending Theorem \ref{cor:main}}	\begin{conjecture} \label{conj:1.3}
		Let $w \in S_n$. If $u$ is a subword of $w$, then
$$
			\sum_{u \le v \le w} (-1)^{\left|w\right|-\left|v\right|} \Schub_{\textup{perm}(v)}({\bf 1})\geq 0.
$$
	 		\end{conjecture}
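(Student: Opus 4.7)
The plan is to upgrade Conjecture \ref{conj:1.3} to a polynomial statement in the spirit of Theorem \ref{thm:main} and then specialize all variables to $1$, at which point the monomial prefactors $M_{w,v}$ become irrelevant, each evaluating to $1$. The aim is to exhibit, for each pair $v \le w$, a monomial $M_{w,v}$ --- not necessarily the one used in Theorem \ref{thm:main} --- such that
\[
P_{w,u}(\mathbf{x}) \;=\; \sum_{u \le v \le w} (-1)^{|w|-|v|} M_{w,v} \Schub_{\perm(v)}(\mathbf{x}_{w^{-1}(v)}) \;\in\; \Z_{\ge 0}[x_1, \ldots, x_n].
\]
Theorem \ref{thm:gen1} together with the families $\Purple_{k,l}(D)$ supplies the local flexibility needed: for any single row/column deletion, every monomial drawn from the corresponding $\Purple$ set makes the one-step difference $\chi_D - M\chi_{\widehat{D}}$ non-negative. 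I would therefore fix a canonical rule --- for instance, the lex-smallest element of the relevant $\Purple$ at each deletion step --- and set $M_{w,v}$ to be the product of the resulting one-step multipliers along a distinguished chain from $w$ to $v$.

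Next I would attempt to mimic the combinatorial backbone of the proof of Theorem \ref{thm:main}. There, Fan--Guo's identity (Corollary \ref{cor:counting}) identified the coefficient $[\mathbf{m}]\Schub_{\perm(v)}(\mathbf{x}_{w^{-1}(v)})$ with the cardinality of an explicit diagram set $B_v$, and the family $\{B_v\}_{v \le w}$ enjoyed $B_v \subseteq B_{v'}$ whenever $v \le v'$ together with $B_v \cap B_{v'} = B_{v \wedge v'}$, so that standard inclusion--exclusion collapsed the alternating sum to $|B_w \setminus \bigcup_v B_v|$. For a general permutation the analogous diagram sets overcount, but Theorem \ref{cor:fms} realizes $[\mathbf{m}]\chi_{D(w)}$ as the dimension of a determinantal subspace of $\mathcal{M}_{D(w)}$. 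The natural replacement is to run inclusion--exclusion at the level of vector spaces --- for example, by assembling these subspaces into a chain complex indexed by the interval $[u,w]$ and extracting non-negativity from a compatible filtration, or by producing a sign-reversing involution on determinantal spanning sets that respects the order on subwords.

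The main obstacle is that Conjecture \ref{conj:1.3} specializes to Conjecture \ref{conj:gao} at $u=()$, so any successful strategy must in particular resolve Gao's conjecture in full generality, a problem that so far is open. Theorem \ref{thm:gen1} provides only a one-step local non-negativity, so the serious work lies in coordinating the monomial choices and dimension-level cancellations coherently across the whole interval $[u,w]$. A potentially more tractable route would be to first extend the augmentation-theoretic combinatorial model of Theorem \ref{thm:cw} beyond the $1432,1423$-avoiding case --- perhaps through reduced pipe dreams, prism tableaux, or another model in which $\Schub_w(\mathbf{1})$ is counted outside the Fan--Guo regime --- which would settle Conjecture \ref{conj:gao} at $u=()$; a parametric refinement incorporating $u$ as an additional constraint on the counted objects would then yield Conjecture \ref{conj:1.3} in full.
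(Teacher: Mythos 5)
You are asked to prove Conjecture \ref{conj:1.3}, but this statement is precisely a conjecture in the paper: it has no proof there, and the paper proves it only in the special case of $1432$- and $1423$-avoiding permutations (Theorem \ref{cor:main}). Your write-up is therefore not a proof but a research plan, and you say as much yourself. It is worth noting that the plan you outline is essentially the paper's own Problem~\ref{prob:gen}: pick monomials $m_{w,v}$ using Theorem~\ref{thm:gen} (or its hoped-for strengthening via Problem~\ref{prob:char}) so that the alternating sum lies in $\Z_{\ge0}[x_1,\dots,x_n]$, which upon setting $\mathbf{x}=\mathbf{1}$ would deliver Conjecture~\ref{conj:1.3}. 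So your strategy is not new to the paper, and the authors explicitly flag it as open.

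The concrete gaps in the plan are as follows. First, the proposal to define $M_{w,v}$ by multiplying one-step $\Purple$-monomials along ``a distinguished chain'' from $w$ to $v$ is not well posed: different chains from $w$ to $v$ (different orders of row/column deletion) can traverse different $\Purple$ sets, and you do not argue that your canonical rule yields a chain-independent product. Even if it did, Theorem~\ref{thm:gen1} gives only pairwise one-step nonnegativity of $\chi_D - M\chi_{\widehat{D}}$; inclusion-exclusion requires the much stronger lattice-compatibility exhibited in the proof of Theorem~\ref{thm:main}, namely $B_v \subseteq B_{v'}$ for $v \le v'$ and $B_{v}\cap B_{v'} = B_{v\wedge v'}$. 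Outside the Fan--Guo regime, Theorem~\ref{cor:fms} gives the coefficient $[\mathbf{m}]\chi_{D(w)}$ as the \emph{dimension of a span}, not as the \emph{cardinality of a set} of diagrams, and dimensions of spans do not obey inclusion-exclusion: there is no general identity replacing $|B_w\setminus\bigcup_v B_v|$ at the vector-space level, and your suggestion of a chain complex or a sign-reversing involution on determinantal spanning sets is not constructed. Second, you correctly observe that the $u=()$ case is exactly Gao's Conjecture~\ref{conj:gao}, which the paper does not resolve in general; any argument you give would have to go through that, and nothing in your proposal does. In short, you have re-derived the paper's stated open problem and identified the right obstructions, but you have not supplied a proof.
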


Theorem \ref{cor:main} confirms the above conjecture for $1432$ and $1423$ avoiding permutations. A special case of Conjecture \ref{conj:1.3} is Gao's conjecture \ref{conj:gao}. Conjecture \ref{conj:1.3} has been verified by computer for all permutations in $S_n$ for $n\leq 8$.

\subsection{Extending Theorem \ref{thm:gen}}	
The monomials $M(x_1, \dots, x_n) = \prod_{(i, j) \in K} x_i$ we constructed from 
	diagrams $K \in \Purple_{k, \sigma_k}(D(\sigma))$   in Theorem \ref{thm:gen} do not always characterize all monomials for which 

$$\mathfrak{S}_{\sigma}(x_1, \ldots, x_n)-M(x_1, \ldots, x_n) \mathfrak{S}_{\pi}(x_{1}, \ldots, \widehat{x_k}, \ldots, x_{n}) \in \mathbb{Z}_{\geq 0}[x_1, \ldots, x_n]$$ holds; recall that   $\pi \in S_{n-1}$  is obtained by removing row $k$ and columns $\sigma_k$ of $D(\sigma)$. The following example illustrates this:

\begin{example} For the permutation $\sigma=1432$ and its pattern $\pi=132$ (coming from the subword $142$ of $1432$) obtained by removing row $k=3$ and column $\sigma_k=3$ of $D(\sigma)$, the set of monomials of the form $\prod_{(i, j) \in K} x_i$ constructed from diagrams $K \in \Purple_{3, \sigma_3}(D(\sigma))$ is $\{x_1x_3, x_2x_3\}$, yet the monomial $M(x_1, \ldots, x_n) = x_1x_2$ also yields $\mathfrak{S}_{\sigma}(x_1, \ldots, x_n)-M(x_1, \ldots, x_n) \mathfrak{S}_{\pi}(x_{1}, \ldots, \widehat{x_k}, \ldots, x_{n}) \in \mathbb{Z}_{\geq 0}[x_1, \ldots, x_n]$. In contrast, for $\sigma=1432$ and its pattern $\pi=132$ (coming from the subword $143$ of $1432$)  obtained by removing row $k=4$ and column $\sigma_4=2$ of $D(\sigma)$, the set of monomials of the form $\prod_{(i, j) \in K} x_i$ constructed from  diagrams $K \in \Purple_{4, \sigma_4}(D(\sigma))$ is $\{ x_1x_2, x_1x_3, x_2x_3\}$ and these are all the monomials for which $\mathfrak{S}_{\sigma}(x_1, \ldots, x_n)-M(x_1, \ldots, x_n) \mathfrak{S}_{\pi}(x_{1}, \ldots, \widehat{x_k}, \ldots, x_{n}) \in \mathbb{Z}_{\geq 0}[x_1, \ldots, x_n]$. \end{example}

	\begin{problem} \label{prob:char} Given permutation $\sigma \in S_n$ and its pattern  $\pi \in S_{n-1}$  obtained by removing row $k$ and column $\sigma_k$ of $D(\sigma)$, characterize all monomials $M(x_1, \ldots, x_n)$ for which  $$\mathfrak{S}_{\sigma}(x_1, \ldots, x_n)-M(x_1, \ldots, x_n) \mathfrak{S}_{\pi}(x_{1}, \ldots, \widehat{x_k}, \ldots, x_{n}) \in \mathbb{Z}_{\geq 0}[x_1, \ldots, x_n]$$ holds. \end{problem}

We conjecture that for $1432$ and $1423$ avoiding permutations Theorem \ref{thm:gen} characterizes these monomials:

\begin{conjecture} \label{conj:16} For $1432$ and $1423$ avoiding permutation $\sigma \in S_n$ and its pattern $\pi \in S_{n-1}$ obtained by removing row $k$ and column $\sigma_k$ of $D(\sigma)$, 
$$\mathfrak{S}_{\sigma}(x_1, \ldots, x_n)-M(x_1, \ldots, x_n) \mathfrak{S}_{\pi}(x_{1}, \ldots, \widehat{x_k}, \ldots, x_{n}) \in \mathbb{Z}_{\geq 0}[x_1, \ldots, x_n]$$ if and only if $M(x_1, \dots, x_n) = \prod_{(i, j) \in K} x_i$, where $K \in \Purple_{k, \sigma_k}(D(\sigma))$. 
\end{conjecture}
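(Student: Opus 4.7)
The $\Leftarrow$ direction is Theorem \ref{thm:gen}, so the task is to prove the converse. Write $D = D(\sigma)$, $\widehat{D} = \widehat{D(\sigma)}_{k, \sigma_k}$, and $K^0 := D \setminus \widehat{D}$. Since $\sigma$ and its pattern $\pi$ both avoid $1432$ and $1423$, Corollary \ref{cor:counting} gives
\[
\chi_D = \sum_{C \le D} x^C \qquad \text{and} \qquad \chi_{\widehat{D}}(x_1, \ldots, x_{k-1}, 0, x_{k+1}, \ldots, x_n) = \sum_{\widehat{C}} x^{\widehat{C}},
\]
the second sum ranging over $\widehat{C} \le \widehat{D}$ with no box in row $k$. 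Hence the polynomial inequality of the conjecture is equivalent to the combinatorial inequality, for every monomial $\mathbf{m}$ with $x_k \nmid \mathbf{m}$,
\[
\#\{C \le D : x^C = M\mathbf{m}\} \ \ge\ \#\{\widehat{C} \le \widehat{D} : \widehat{C} \text{ has no row-}k\text{ box},\ x^{\widehat{C}} = \mathbf{m}\}.
\]

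\textbf{From inequalities to a matching.} I would apply Hall's marriage theorem to the bipartite graph $G$ whose left vertices are the row-$k$-avoiding $\widehat{C} \le \widehat{D}$, whose right vertices are the $C \le D$, and with edges $\widehat{C} \sim C$ iff $x^C = M x^{\widehat{C}}$. Since neighborhoods in $G$ partition along the target monomial $Mx^{\widehat{C}}$, the displayed inequality is exactly Hall's condition on $G$ and yields an injection $\Phi : \widehat{C} \mapsto C$ with $x^{\Phi(\widehat{C})} = M x^{\widehat{C}}$. This injection is the only consequence of the hypothesis; the rest of the plan is to use $\Phi$ to produce a diagram $K$ with $x^K = M$ and to verify $K \in \Purple_{k, \sigma_k}(D)$. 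Unwinding the definition and using that $K^0 \subseteq \purple_{k, \sigma_k}(D)$ (since every box of $K^0$ lies in row $k$ or column $\sigma_k$), membership in $\Purple_{k, \sigma_k}(D)$ is equivalent to $K \le K^0$ together with $K \subseteq \purple_{k, \sigma_k}(D)$.

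\textbf{Main obstacle.} The delicate point is that the naive attempt $K := C_0 \setminus \widehat{D}$ with $C_0 := \Phi(\widehat{D})$ need not succeed. Although the column sizes balance, $|C_{0, j}| = |\widehat{D}_j| + |K^0_j|$, the identity $x^{C_0} = Mx^{\widehat{D}}$ is a global row-multiset condition, not a column-wise one, so a column $C_{0, j}$ can fail to contain $\widehat{D}_j$ (for instance, with $D_j = \{1, 3\}$ and $k = 1$, the admissible $C_{0, j} = \{1, 2\}$ omits the row-$3$ box of $\widehat{D}_j = \{3\}$). To circumvent this, I would exploit the \emph{full} injection $\Phi$ and argue by contradiction: supposing no $K$ with $x^K = M$ lies in $\Purple_{k, \sigma_k}(D)$, every candidate $K$ with $x^K = M$ either violates $K \le K^0$ or contains a box outside $\purple_{k, \sigma_k}(D)$, and the plan is to pinpoint such a violating box $(i, j)$ and construct a specific row-$k$-avoiding $\widehat{C}^\ast \le \widehat{D}$ for which $\mathbf{m}^\ast = x^{\widehat{C}^\ast}$ witnesses failure of the combinatorial inequality above. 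This would reverse the roles of Lemmas \ref{lem:aug} and \ref{lem:lift}: the \emph{absence} of the purple structure obstructs matchings, rather than its presence constructing them as in the proof of Theorem \ref{thm:gen1}. Carrying out this witness construction -- which should use Fan-Guo's Theorem \ref{thm:fg} to translate combinatorial counts back into Schubert coefficients -- is the principal technical obstacle; the authors' computer verification of the closely related Conjecture \ref{conj:1.3} for $n \le 8$ should guide the choice of witnesses.
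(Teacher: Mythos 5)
This statement is labeled a \emph{conjecture} in the paper (Conjecture~\ref{conj:16}), and the paper does not contain a proof of it; the authors report only computer verification for all permutations in $S_n$, $n \le 8$. There is therefore no paper proof to compare against, and your proposal should be judged on its own terms as an attempt at the open $\Rightarrow$ direction.

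What you have is correct so far as it goes, but it is explicitly incomplete. The identification of the $\Leftarrow$ direction with Theorem~\ref{thm:gen} is right. The reduction of the polynomial nonnegativity to a per-monomial counting inequality via Corollary~\ref{cor:counting} (Fan--Guo) is valid: both $\sigma$ and its pattern $\pi$ avoid $1432$ and $1423$, so both Schubert polynomials are $0/1$-polynomials with support indexed by diagrams $\le D$, and your observation that the bipartite graph decomposes into complete bipartite components indexed by target monomial (so Hall's condition collapses to the per-monomial cardinality inequality) is sound. Your characterization $\Purple_{k,\sigma_k}(D) = \{K : K \le K^0,\ K \subseteq \purple_{k,\sigma_k}(D)\}$, using $K^0 \subseteq \purple_{k,\sigma_k}(D)$, is also a correct unwinding of the definitions. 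However, you have correctly identified, and then left unresolved, the crux: given that no $K$ with $x^K = M$ lies in $\Purple_{k,\sigma_k}(D)$, you must exhibit a monomial $\mathbf{m}^*$ for which the counting inequality fails, and your concrete example (a column with $D_j = \{1,3\}$, $k=1$, $C_{0,j} = \{1,2\} \not\supseteq \widehat{D}_j$) shows exactly why the naive construction $K := \Phi(\widehat{D}) \setminus \widehat{D}$ does not work. The passage ``the plan is to pinpoint such a violating box $(i,j)$ and construct a specific $\widehat{C}^*$'' is the entire content of the $\Rightarrow$ direction, and it is not carried out. As written, this is a research plan with an acknowledged gap, not a proof; given that the statement remains open in the paper itself, that is a fair assessment of where the difficulty lies, but it does not close the conjecture.
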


Conjecture \ref{conj:16} has been verified by computer for all permutations in $S_n$ for $n\leq 8$.
We note that there are permutation and pattern pairs $\sigma \in S_n$ and $\pi \in S_{n-1}$, where $\sigma$ is not $1432$ and $1423$ avoiding, yet $\mathfrak{S}_{\sigma}(x_1, \ldots, x_n)-M(x_1, \ldots, x_n) \mathfrak{S}_{\pi}(x_{1}, \ldots, \widehat{x_k}, \ldots, x_{n}) \in \mathbb{Z}_{\geq 0}[x_1, \ldots, x_n]$ if and only if $M(x_1, \dots, x_n) = \prod_{(i, j) \in K} x_i$, where $K \in \Purple_{k, \sigma_k}(D(\sigma))$. An example is $\sigma=1423$ and any of its patterns $\pi$ obtained from $D(\sigma)$ by removing row $k$ and column $\sigma_k$ ($k \in [4]$).

	\subsection{Extending Theorem \ref{thm:main}} As stated, Theorem \ref{thm:main} does not hold for  all permutations. However, it is natural to wonder about the following extension:

	 \begin{problem} \label{prob:gen}  Let $w \in S_n$ and let $u$ be a subword of $w$. Using the monomials from Theorem \ref{thm:gen} (or its extension asked for in Problem \ref{prob:char}) is it possible to pick suitable monomials $m_{w,v} \in \Z[x_1, \ldots, x_n]$ to make the expression

	$$\sum_{u \le v \le w} (-1)^{\left|w\right|-\left|v\right|} m_{w, v} \Schub_{\perm(v)}(\mathbf{x}_{w^{-1}(v)})$$ belong to $\Z_{\ge0}[x_1, \dots, x_n]$?
 	  \end{problem}

	Note that a positive answer to Problem \ref{prob:gen} would be an extension of Theorem \ref{thm:main} which would readily imply Conjecure \ref{conj:1.3} as well as Gao's Conjecture \ref{conj:gao}.

	\bibliographystyle{plain}
	\bibliography{gao-bibliography}
\end{document}